\documentclass[10pt,twoside,reqno]{amsart}
 \usepackage[top=1.4in, bottom=1.4in, left=1in, right=1in]{geometry}
\usepackage{graphicx}
\usepackage{amsmath,latexsym}
\usepackage{amsfonts,amssymb}
\usepackage{amsmath}
\usepackage{amscd, amsthm}
\usepackage{stmaryrd}
\usepackage{fancyhdr}
\usepackage{commath,enumerate}
\usepackage{indentfirst, hyperref}

\usepackage{comment}

\usepackage{pgf,tikz}
\usepackage{mathrsfs}
\usetikzlibrary{arrows,calc,positioning}
\usepackage{float}
\numberwithin{figure}{section}

\DeclareGraphicsRule{.tif}{png}{.png}{`convert #1 `basename #1 .tif`.png}

\numberwithin{equation}{section}
\newtheorem{theorem}{Theorem}[section]
\newtheorem{lemma}[theorem]{Lemma}
\newtheorem{proposition}[theorem]{Proposition}
\newtheorem{corollary}[theorem]{Corollary}

\newcommand{\Rm}[1]{
  \textup{\uppercase\expandafter{\romannumeral#1}}
}
\let\olddefinition\definition
\renewcommand{\definition}{\olddefinition\normalfont}
\let\oldremark\remark
\renewcommand{\remark}{\oldremark\normalfont}

\newcommand{\C}{\mathbb{C}}
\newcommand{\D}{\mathcal{D}}

\newcommand{\F}{\mathcal{F}}
\renewcommand{\H}{\mathcal{H}}

\newcommand{\N}{\mathbb{N}}
\renewcommand{\O}{\mathcal{O}}
\renewcommand{\P}{\mathcal{P}}
\newcommand{\R}{\mathbb{R}}
\newcommand{\Rc}{\mathcal{R}}
\newcommand{\T}{\mathbb{T}}
\newcommand{\Z}{\mathbb{Z}}
\newcommand{\Time}{T}

\def\vp{\varphi}
\def\ve{\varepsilon}
\def\px{\partial_x}
\def\pt{\partial_t}

\newcommand{\diff}{\,\mathrm{d}}

\def\bel{\begin{equation}\label}
\def\beq{\begin{equation}}
\def\eeq{\end{equation}}
\def\bega{\begin{array}}
\def\enda{\end{array}}

\author{John K. Hunter}
\address{Department of Mathematics, University of California at Davis}
\email{jkhunter@ucdavis.edu}
\thanks{JKH was supported by the NSF under grant number DMS-1616988}
\author{Jingyang Shu}
\address{Department of Mathematics, University of California at Davis}
\email{jyshu@ucdavis.edu}
\author{Qingtian Zhang}
\address{Department of Mathematics, University of California at Davis}
\email{qzhang@math.ucdavis.edu}
\title[Approximate SQG Front Equation]{Local Well-posedness of an Approximate Equation for SQG Fronts}
\date{\today}

\begin{document}

\begin{abstract}
We prove local well-posedness in the Sobolev spaces $\dot{H}^s(\T)$, with $s>7/2$, for an initial value problem for a nonlocal, cubically nonlinear, dispersive equation that provides an approximate description of the evolution of surface quasi-geostrophic (SQG) fronts with small slopes.
\end{abstract}

\maketitle

\section{Introduction}
In this paper, we prove the local well-posedness of the initial value problem
\begin{align}
\label{sqgivp}
\begin{split}
&\varphi_t + \frac 12 \partial_x \bigg\{\varphi^2 \log|\partial_x| \varphi_{xx}
- \varphi \log|\partial_x| (\varphi^2)_{xx} + \frac 13 \log|\partial_x| (\varphi^3)_{xx}\bigg\}
= 2 \log|\partial_x| \varphi_x,
\\
&\varphi(x,0) = \varphi_0(x),
\end{split}
\end{align}
where $\vp \colon \T\times \R \to \R$, with $\T = \R/2\pi\Z$, is spatially periodic with zero mean,
and $\log|\partial_x|$ is the Fourier multiplier operator with symbol $\log|\xi|$.
As explained further in Section~\ref{sec-sqg}, this initial value problem provides a cubically nonlinear approximation for the motion of
a surface quasi-geostrophic (SQG) front with small slope located at $y=\vp(x,t)$.
Previous well-posedness results for SQG fronts include \cite{CoCoGa,FeRo11,Gan,Rod05}.

Before stating our main result, we introduce some notation that is used throughout the paper. We write the
Fourier series of a function $f : \T\to \C$ with Fourier coefficients $\hat f(\xi)=(\F f)(\xi)$ as
\[
f(x)=\sum\limits_{\xi\in\Z} \hat f(\xi) e^{i\xi x},  \qquad \hat f(\xi)=\frac1{2\pi} \int_{\T}f(x) e^{-i\xi x}\diff{x}.
\]
We
denote the Hilbert space of zero-mean, periodic functions with square-integrable weak derivatives of the order $s\in \R$ by
\begin{align*}
\begin{split}
\dot{H}^s(\T) &= \left\{f \colon \T \to \R \mid \text{$\hat{f}(0) = 0$,  $\norm{f}_{\dot{H}^s} < \infty$}\right\},
\\
\norm{f}_{\dot{H}^s}
&=\left(\sum_{\xi \in \Z_*} \abs{\xi}^{2s} |\hat{f}(\xi)|^2\right)^{1/2},
\end{split}
\end{align*}
where $\Z_* = \Z \setminus \{0\}$ is the set of nonzero integers.
For $\sigma\in \N$, we denote by $W^{\sigma,\infty}(\T)$
the Sobolev space of functions $u : \T\to \R$ with $L^\infty$-derivatives of order less than or equal to $\sigma$ and norm
\[
\|u\|_{W^{\sigma,\infty}} =\sum\limits_{k=0}^\sigma \sup_{x\in \T} \left|\partial_x^k u(x)\right|.
\]
We consider only spatially periodic functions in this paper, and, when convenient, we omit the $\T$.

We denote by
\begin{equation}
L = \log|\px|,\qquad D = -i \px,\qquad |D|^s = |\px|^s
\label{defL}
\end{equation}
the Fourier multiplier operators with symbols $\lambda(\xi)$, $\xi$, $|\xi|^s$, respectively, where
\[
\lambda(\xi) = \begin{cases} \log|\xi| & \text{if $\xi \in \Z_*$},
\\
0 & \text{if $\xi =0$}.
\end{cases}
\]
Finally, we denote by $T_u$ the Weyl para-product operator with $u$, which is described in more detail in
Section~\ref{sec-weyl}.

Our main result is the following.
\begin{theorem}
\label{lwpthm}
Let $s > 7/2$. If $\varphi_0 \in \dot{H}^s(\T)$ satisfies $\|T_{\vp_{0x}}^2\|_{L^2\to L^2} \leq C$ for some  $0< C < 2$,
then there exists $\Time > 0$ depending only on $\|\vp_0\|_{\dot{H}^s}$ and $C$
such that the initial value problem \eqref{sqgivp} has a unique solution with $\varphi \in C([0, \Time]; \dot{H}^s(\T))$.
The solution map $U(t): \dot{H}^s(\T)\to C([0, \Time]; \dot{H}^s(\T))$, where $U(t) : \vp_0(x)\mapsto \vp(x,t)$, is continuous on
$\dot{H}^s(\T)$ and Lipschitz continuous on $\dot{H}^{r}(\T)$ for $0\le r<s-1$, meaning that if $\vp, \psi \in C([0, \Time]; \dot{H}^s(\T))$ are solutions, then there exists a constant $M>0$ depending on
$\|\vp\|_{C([0, \Time]; \dot{H}^s)}$, $\|\psi\|_{C([0, \Time]; \dot{H}^s)}$ such that
\begin{equation}\label{stability}
\|\vp(\cdot,t)-\psi(\cdot,t)\|_{H^{r}}\leq M\|\vp(\cdot,0)-\psi(\cdot,0)\|_{H^{r}} \qquad \text{for all $t\in [0,T]$}.
\end{equation}
\end{theorem}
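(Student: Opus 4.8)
The plan is to combine a paradifferential reformulation of \eqref{sqgivp}, a modified energy estimate, an existence proof by regularization and compactness, and a Bona--Smith argument for continuity of the solution map at the top regularity. The structural point that makes the scheme work is that the cubic nonlinearity
\[
N[\varphi] = \varphi^2 L\varphi_{xx} - \varphi\, L(\varphi^2)_{xx} + \tfrac13 L(\varphi^3)_{xx}
\]
vanishes identically when $L$ is replaced by the identity, or by any constant multiple of it: using $(\varphi^2)_{xx}=2\varphi\varphi_{xx}+2\varphi_x^2$ and $(\varphi^3)_{xx}=3\varphi^2\varphi_{xx}+6\varphi\varphi_x^2$ one checks $\varphi^2\varphi_{xx}-\varphi(2\varphi\varphi_{xx}+2\varphi_x^2)+\tfrac13(3\varphi^2\varphi_{xx}+6\varphi\varphi_x^2)=0$. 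Thus $N[\varphi]$ is assembled entirely from commutators of $L$ with multiplication operators, and since $\lambda(\xi)=\log|\xi|$ grows slowly, each such commutator gains essentially a full derivative; consequently $\partial_x N[\varphi]$ behaves like a second-order (logarithmically corrected) operator applied to $\varphi$ rather than the third-order operator suggested by naive counting. First I would make this precise: decompose every product into Weyl para-products plus smoother remainders, use the self-adjointness of $T_u$ for real $u$, the symbolic calculus ($T_uT_v=T_{uv}$ modulo smoothing and $[L,T_u]$ of order $-1$), and collect the result in the form
\[
\varphi_t + \partial_x\bigl(\mathcal{L}[\varphi]\varphi\bigr) = R[\varphi],
\]
where $\mathcal{L}[\varphi]$ is a self-adjoint para-differential operator, quadratic in $\varphi$, of order at most $1$ up to logarithmic losses (the linear term $2L\varphi_x$ being absorbed into it), and $R[\varphi]$ maps $\dot{H}^s(\T)\to\dot{H}^{s-1}(\T)$ for $\varphi\in\dot{H}^s(\T)$.

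The energy estimate exploits the fact that $a\mapsto\partial_x T_a\partial_x$ is self-adjoint while $\partial_x T_a$ is skew-adjoint modulo lower order, so that the only genuinely dangerous contribution to an $\dot{H}^s$ bound comes from a single term whose coefficient operator is, at leading order, $T_{\varphi_x}^2$. Accordingly I would work with the modified functional
\[
E_s(t) = \bigl\langle (2 - T_{\varphi_x}^2)|D|^s\varphi(t), |D|^s\varphi(t)\bigr\rangle + (\text{lower-order corrections in }\varphi) = 2\|\varphi(t)\|_{\dot{H}^s}^2 - \bigl\|T_{\varphi_x}|D|^s\varphi(t)\bigr\|_{L^2}^2 + \cdots,
\]
and observe that, since $T_{\varphi_x}$ is self-adjoint, $\|T_{\varphi_x}^2\|_{L^2\to L^2}=\|T_{\varphi_x}\|_{L^2\to L^2}^2$, so the hypothesis $\|T_{\varphi_{0x}}^2\|_{L^2\to L^2}\le C<2$ makes $E_s$ equivalent to $\|\varphi\|_{\dot{H}^s}^2$ with constants depending only on $C$. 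Differentiating $E_s$, using \eqref{sqgivp} to substitute for $\varphi_t$ (hence for $\varphi_{xt}$), the skew-adjoint leading part drops out, the $T_{\varphi_x}^2$-contribution is cancelled by the time derivative of the weight together with the lower-order corrections, and what remains is bounded by $P(\|\varphi\|_{\dot{H}^s})E_s$ for a continuous function $P$; the assumption $s>7/2$ enters here because several remainder terms ask for $\varphi\in W^{3,\infty}(\T)$, supplied by $\dot{H}^s(\T)\hookrightarrow W^{3,\infty}(\T)$ precisely when $s>7/2$. Grönwall's inequality then gives a bound for $\|\varphi(t)\|_{\dot{H}^s}$ on an interval $[0,\Time]$ with $\Time$ depending only on $\|\varphi_0\|_{\dot{H}^s}$ and $C$, and continuity in time of $t\mapsto\|T_{\varphi_x(t)}^2\|_{L^2\to L^2}$ shows that, after possibly shrinking $\Time$, this quantity stays below $2$, keeping the estimate self-consistent.

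Existence follows by a routine regularization: replace \eqref{sqgivp} by a Friedrichs-regularized version whose right-hand side maps $\dot{H}^s(\T)$ to itself, solve the resulting locally Lipschitz ODE in $\dot{H}^s(\T)$, derive the energy estimate above uniformly in the regularization parameter, and pass to the limit using the compact embedding $\dot{H}^s(\T)\hookrightarrow\hookrightarrow\dot{H}^{s-1}(\T)$ and the Aubin--Lions lemma; this produces a solution in $L^\infty([0,\Time];\dot{H}^s)\cap C([0,\Time];\dot{H}^{s-1})$, which is upgraded to $C([0,\Time];\dot{H}^s)$ in the standard way, from weak-in-time continuity together with continuity of $t\mapsto\|\varphi(t)\|_{\dot{H}^s}$ obtained from the energy identity. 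For uniqueness and the stability bound \eqref{stability}, let $\varphi,\psi\in C([0,\Time];\dot{H}^s)$ be two solutions and put $w=\varphi-\psi$; then $w$ satisfies a linear equation with coefficients quadratic in $(\varphi,\psi)$, and an energy estimate at level $\dot{H}^r$ — again modified by a $T$-weight, coercive because $\|T_{\varphi_x}^2\|_{L^2\to L^2}$ and $\|T_{\psi_x}^2\|_{L^2\to L^2}$ remain below $2$ on $[0,\Time]$ — yields $\|w(t)\|_{\dot{H}^r}\le e^{Mt}\|w(0)\|_{\dot{H}^r}$ with $M$ depending on $\|\varphi\|_{C([0,\Time];\dot{H}^s)}$ and $\|\psi\|_{C([0,\Time];\dot{H}^s)}$; the restriction $r<s-1$ is forced by the one-derivative loss in the coefficients of the difference equation, absorbed by interpolation against the uniform $\dot{H}^s$ bounds. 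Taking $w(0)=0$ gives uniqueness.

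It remains to prove continuity of $U(t)$ on $\dot{H}^s(\T)$, where Lipschitz continuity fails because of the derivative loss; for this I would use the Bona--Smith method. Approximate $\varphi_0$ by smooth data $\varphi_0^{(n)}$, say by truncating Fourier modes, solve to get $\varphi^{(n)}$, and show — using the uniform $\dot{H}^s$ bounds, the $\dot{H}^r$ Lipschitz estimate, and an interpolation estimate for the high-frequency part of $\varphi^{(n)}-\varphi$ — that $\varphi^{(n)}\to\varphi$ in $C([0,\Time];\dot{H}^s)$; continuity of the data-to-solution map then follows by a triangle-inequality argument combining this with the $\dot{H}^r$ estimate between nearby smooth data. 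I expect the two principal obstacles to be (i) executing the paralinearization cleanly — in particular making rigorous the derivative gain in the commutators with the nonlocal operator $L=\log|\partial_x|$ and tracking every Weyl para-product remainder — and (ii) the Bona--Smith argument, which needs careful frequency-by-frequency bookkeeping; by comparison, the coercivity condition $C<2$ and its propagation in time are a soft consequence of the energy structure and of continuity in time.
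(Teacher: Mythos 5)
Your overall architecture (Weyl paralinearization of the cubic flux, an energy weighted by $2-T_{\vp_x}^2$, Friedrichs/Galerkin regularization plus Aubin--Lions, an $\dot H^r$ difference estimate for $r<s-1$, and a Bona--Smith argument for continuity at top regularity) is the same as the paper's. The genuine gap is at the heart of the argument: how the $\dot H^s$ estimate closes. You take the weight to the \emph{first} power, $E_s=\langle(2-T_{\vp_x}^2)|D|^s\vp,|D|^s\vp\rangle$ plus unspecified ``lower-order corrections,'' and you assert that the dangerous $T_{\vp_x}^2$-contribution ``is cancelled by the time derivative of the weight together with the lower-order corrections.'' That mechanism does not work: $\partial_t(2-T_{\vp_x}^2)$ involves $T_{\vp_{xt}}$ with $\vp_{xt}$ bounded in $L^\infty$ through the equation, so the weight's time derivative only contributes harmless terms of size $\P(\|\vp\|_{\dot H^s})$ --- it cannot absorb any loss of derivatives. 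After the equation is put in the form \eqref{SEq}, the real danger is the commutator $[|D|^s,T_{\vp_x}^2]\vp\sim s\,(T_{\vp_x}T_{\vp_{xx}}+T_{\vp_{xx}}T_{\vp_x})|D|^{s-2}D\vp$, which lies only in $\dot H^1$; once $\px L$ acts on it and the result is paired with $(2-T_{\vp_x}^2)|D|^s\vp$, a logarithm of a derivative is lost and Gr\"onwall does not close --- precisely the obstruction the paper starts from. Skew-symmetry of $\px L$ only removes the term in which $|D|^s$ commutes exactly with the weight.

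The paper's key device, absent from your outline, is the $s$-dependent power in the weight \eqref{weighted_energy}: one applies $(2-T_{\vp_x}^2)^s$ to the $|D|^s$-equation so that the factor $s+1$ produced by $[|D|^s,T_{\vp_x}^2]$ matches the chain-rule derivative of $(2-T_{\vp_x}^2)^{s+1}$, turning the entire right-hand side into $\px L\big[(2-T_{\vp_x}^2)^{s+1}|D|^s\vp\big]$ modulo $L^2$-bounded remainders; pairing against $(2-T_{\vp_x}^2)^{s+1}|D|^s\vp$ then annihilates it exactly, which is why the energy carries the power $2s+1$. This in turn requires defining and differentiating fractional powers of the paraproduct operator $2-T_{\vp_x}^2$ via the Helffer--Sj\"ostrand formula (Lemma \ref{lem-dfT}), a step your proposal does not address. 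Unless your ``lower-order corrections'' are constructed to reproduce exactly this structure (a normal-form-type correction you would have to build and estimate), the energy estimate as described fails by a logarithm. A secondary inaccuracy: your order count of the flux is off --- Lemma \ref{bony-sqg} shows the flux divergence loses $1+\log$ derivatives, its dangerous part being exactly $\px L(T_{\vp_x}^2\vp)$, of the same order as the dispersive term $2L\vp_x$; if it were genuinely second order, as you claim, no weight of this kind could rescue the estimate. The remaining components of your plan (existence by regularization, uniqueness and the $\dot H^r$ Lipschitz bound, Bona--Smith continuity) coincide with the paper and are sound once the weighted-energy machinery above is in place.
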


The difficulty in proving this result is that straightforward $\dot{H}^s$-estimates for \eqref{sqgivp} do not close, due to a logarithmic loss of derivatives \cite{HSh17}. We can, however, get closed estimates for a weighted $\dot{H}^s$-energy defined  by
\begin{equation}
E^{(s)}(\vp)=\int_{\T} |D|^{s} \vp\cdot\left(2-T_{\vp_x}^2\right)^{2s+1}|D|^{s} \vp \diff{x}.
\label{weighted_energy}
\end{equation}
Here, the term $2$ in the para-product operator $(2-T_{\vp_x}^2)$ comes from the linear dispersive term $2\log|\partial_x|\vp_x$ in \eqref{sqgivp}, and it allows one to control the nonlinear contribution from $T_{\vp_x}^2$.

The evolution equation in \eqref{sqgivp} is invariant under the change of variables $(x,t)\mapsto(-x,-t)$, so the same
local existence result holds backward in time. We remark that one gets the continuation of a solution unless $\|\vp(t)\|_{\dot{H}^s}$ blows up or $\|\vp_x(t)\|_{L^\infty}$ increases sufficiently that $(2-T_{\vp_x}^2)$ is no longer positive definite.\footnote{Slopes $\vp_x$ with $\|T_{\vp_{x}}^2\|_{L^2\to L^2} =2$ are, however, outside the regime in which \eqref{sqgivp} is applicable to SQG fronts, since it is derived under the assumption that $|\vp_x| \ll 1$.} A similar proof of local well-posedness applies if the dispersive term in \eqref{sqgivp} has the opposite sign, in which case one replaces $(2-T_{\vp_x}^2)$
by $(2+T_{\vp_x}^2)$ in \eqref{weighted_energy}, and the solution can be continued so long as $\|\vp(t)\|_{\dot{H}^s}$ remains
finite.

An outline of this paper is as follows. In Section~\ref{sec-sqg}, we explain how \eqref{sqgivp} arises as a description of SQG fronts and compare it with equations for generalized SQG fronts. In Section \ref{sec-weyl}, we use Weyl para-differential calculus to derive some estimates for the action of $L$ and $|D|^s$ on products, and in Section \ref{sec-simeqn}, we carry out a Bony decomposition of \eqref{sqgivp}, given in Lemma~\ref{bony-sqg}. In Section \ref{sec-apriori}, we use this decomposition to prove an \emph{a priori} estimate in Proposition~\ref{apriori}, and in Section \ref{Wp}, we construct solutions by a Galerkin method.

\section{SQG fronts}
\label{sec-sqg}

The generalized SQG equation is a transport equation in two space dimensions for an active scalar
$\theta(x,y,t)$,
\begin{align}
\theta_t+u\cdot \nabla\theta=0,\qquad
u=\nabla^{\perp}(-\Delta)^{-\alpha/2}\theta.
\label{gsqg}
\end{align}
Here, $(-\Delta)^{-\alpha/2}$ is a fractional inverse Laplacian, $\nabla^\perp = (-\partial_y,\partial_x)$, and $0<\alpha\le 2$ is a parameter. If $\alpha = 1$, then \eqref{gsqg} is the SQG equation \cite{sqg}, and if $\alpha = 2$, then \eqref{gsqg} is the stream function-vorticity equation for two-dimensional, inviscid, incompressible fluid flows \cite{majda}.

Equation \eqref{gsqg} has piecewise-constant, front solutions of the form
\[
\theta(x,y, t)=\begin{cases} 1/2 & \text{if $y>\vp(x,t)$},\\
-1/2 & \text{if $y<\vp(x,t)$},
\end{cases}
\]
where we assume that the front $y=\vp(x,t)$ is a graph, and we normalize the jump in $\theta$ across the front to
one without loss of generality.

For spatially periodic fronts with $\vp(x+2\pi,t) = \vp(x,t)$, one finds that
\begin{equation}
\vp_t - \int_\T G_\alpha\left(x-x', \vp-\vp'\right) \left(\vp_x - \vp_{x'}'\right) \diff{x'} = 0,
\label{fullsqg}
\end{equation}
where $\vp=\vp(x,t)$, $\vp'=\vp(x',t)$, and $G_\alpha(x,y)$ is the Green's function of
$(-\Delta)^{\alpha/2}$ on the cylinder $\T\times \R$.  For $0<\alpha < 2$,
we have, up to a constant factor,
\[
G_\alpha(x,y) = \frac{1}{\left(x^2 + y^2\right)^{1-\alpha/2}} + \sum_{n\in \Z_*}\left[
\frac{1}{\left((x-2\pi n)^2 + y^2\right)^{1-\alpha/2}} -\frac{1}{\left(2\pi|n|\right)^{2-\alpha}}\right],
\]
and for $\alpha = 2$, we have
\[
G_2(x,y) = -\frac{1}{2\pi} \log\left| \sin\left(\frac{z}{2}\right)\right|,\qquad z = x+iy.
\]

Expansion of \eqref{fullsqg} up to terms that are linear and cubic
in the slope $\vp_x$ leads to the following approximate equation for generalized SQG fronts  \cite{HSh17}:
\begin{equation}
\vp_t+\frac12 a_\alpha \partial_x\left\{ \vp^2A_\alpha\vp-\vp A_\alpha(\vp^2)
+\frac13A_\alpha(\vp^3)\right\}+b_\alpha B_\alpha\vp_x=0.
\label{gapprox}
\end{equation}
Here, $a_\alpha$, $b_\alpha$ are constants depending on $\alpha$, and the multiplier operators $A_\alpha$, $B_\alpha$ are given by
\[
A_\alpha = \partial_x^2 B_\alpha,\qquad B_\alpha = \begin{cases} |\partial_x|^{1-\alpha} & \text{if $\alpha \ne 1$},
\\
\log|\partial_x| & \text{if $\alpha = 1$}.\end{cases}
\]
If $\alpha=1$, then \eqref{gapprox} is the approximate SQG equation in \eqref{sqgivp}.

In qualitative terms, \eqref{gapprox} consists of a nonlocal, cubically-nonlinear equation in conservation form with a linear dispersive term proportional to $B_\alpha \vp_x$.
If $1<\alpha \le 2$, then the dispersive term is of order less than one, and it is not smoothing, but the dispersionless equation is hyperbolic in nature. The initial value problem for both the dispersive and dispersionless equation (with $b_\alpha=0$) is then locally well-posed in $\dot{H}^s(\T)$ for $s > 9/2$ \cite{HSh17}. 

If $0<\alpha<1$, then the dispersionless equation appears to lose fractional derivatives and
not be well-posed in any Sobolev space. In this case, however,
the dispersive term is smoothing of order greater than one, and it is sufficient to control the nonlinear term.
The global well-posedness of the initial value problem on $\R$ for the fully nonlinear front equation
\eqref{fullsqg} with small initial data and $0<\alpha <1$ is proved in \cite{CGI17}.

The SQG equation with $\alpha=1$ is a borderline case.
The dispersive term $2 L\varphi_x$ on the right-hand side of \eqref{sqgivp}
has logarithmically greater order than first order, while the nonlinear flux
on the left-hand side of \eqref{sqgivp} depends on a logarithmic derivative of $\varphi$.
In fact, as shown in Lemma~\ref{bony-sqg}, there is a cancelation of derivatives in the flux, and
\begin{align*}
\varphi^2 L\varphi_{xx}
- \varphi L(\varphi^2)_{xx} + \frac 13L (\varphi^3)_{xx}
&= 2[L,\vp] \vp_x^2 + \left[[L,\vp],\vp\right]\vp_{xx}
=  2L \left(T_{\vp_x}^2\vp\right) +\Rc,
\end{align*}
where $\Rc$ is a lower order remainder term. As a result,  the dispersionless equation appears to lose derivatives at a logarithmic rate.
A weak local well-posedness result for both the dispersive and dispersionless initial value problem is proved in \cite{HSh17}, in which
$\vp(\cdot,t) \in \dot{H}^{\tau(t)}(\T)$ for some function $\tau(t) > 9/2$ that decreases sufficiently rapidly in time.
In this paper, we prove that the dispersive initial value problem \eqref{sqgivp} is locally well-posed in $\dot{H}^s(\T)$ for
any fixed $s > 7/2$.

\section{Weyl para-differential calculus}\label{sec-weyl}
In this section, we use the Weyl para-differential calculus to prove several lemmas for the operators $L$ and $|D|^s$
defined in \eqref{defL}. Further discussion of the Weyl calculus and para-products can be found in \cite{BCD11, chemin, Hor, Tay00}.

Let $\chi : \R \to \R$ be a smooth function supported in the interval $\{\xi\in \R: |\xi|\leq \ve\}$
and equal to $1$  on $\{\xi\in \R: |\xi|\leq \ve'\}$, where $\ve' = 3\ve/4$ and $0<\ve\ll 1$.
If $u, v \in \D'(\T)$ are distributions on $\T$, then
we define the Weyl para-product $T_u v \in \D'(\T)$ by
\[
\F (T_u v)(\xi)=\frac1{2\pi}\sum\limits_{\eta\in \Z_*} \chi\left(\frac{|\xi-\eta|}{|\xi+\eta|}\right)\hat u(\xi-\eta)\hat v(\eta),
\]
where we use the convention that  $\chi\left({|\xi-\eta|}/{|\xi+\eta|}\right) = 0$ if $\xi+\eta=0$.

The smoothness of the para-product is determined by the high-frequency factor $v$.
If $u \in L^\infty$ and $v\in \dot{H}^s$, then
\[
\|T_u v\|_{\dot{H}^s} \leq C \|u\|_{L^\infty} \|v\|_{\dot{H}^s}.
\]
Here, and below, we use $C$ to denote a generic positive constant.
In addition, if $u\in W^{\sigma,\infty}$ for $\sigma\in \N$ and $v\in \dot{H}^{s+\sigma}$, then
we can transfer derivatives from the low-frequency to the high-frequency factor to get
\begin{align*}
\|T_{D^{\sigma} u} v\|_{\dot{H}^{s}} &\leq C \|u\|_{L^\infty} \|v\|_{\dot{H}^{s+\sigma}}.
\end{align*}
When $u$ is a real-valued $L^\infty$-function, the Weyl para-product $T_u$ is a self-adjoint, bounded linear operator on $L^2$;
the self-adjointness of $T_u$ allows us to define the weighted energy \eqref{weighted_energy}.

Bony's decomposition of the product $u v$ is given by
\begin{equation}
uv=T_uv+T_vu+R(u,v).
\label{bony}
\end{equation}
This decomposition is well-defined if, for example, $u \in W^{\sigma,\infty}$ and $v\in \dot{H}^s$
 with $s+\sigma > 0$, and then
\[
\|R(u,v)\|_{\dot{H}^{s+\sigma}} \le C \|u\|_{W^{\sigma,\infty}} \|v\|_{\dot{H}^s}.
\]
We use the notation $\O(f)$ to denote a term satisfying 
\[
\|\O(f)\|_{\dot{H}^s}\leq C\|f\|_{\dot{H}^s}
\]
whenever there exists  $s\in \R$ such that $f\in \dot{H}^s$. We also use $O(f)$ to denote a term satisfying $|O(f)|\leq C|f|$ pointwise.

\begin{lemma}\label{lem-luv}
If $u, v \in L^2$, then
\[\begin{aligned}
 L(uv)&= T_v L u+T_{Dv} D^{-1}u-\frac12 T_{D^2v} D^{-2} u+\frac13 T_{D^3 v} D^{-3} u + \O(T_{D^4 v} D^{-4} u)\\
 & +T_u L v +T_{Du} D^{-1}v-\frac12 T_{D^2u} D^{-2} v+\frac13 T_{D^3 u} D^{-3} v + \O(T_{D^4 u} D^{-4} v)
 +LR(u,v),
\end{aligned}
\]
where the remainder terms satisfy
\begin{align*}
\begin{split}
&\|\O(T_{D^4 v} D^{-4} u)\|_{\dot{H}^s}\leq C\|T_{D^4 v} D^{-4} u\|_{\dot{H}^s},
\qquad
\|\O(T_{D^4 u} D^{-4} v)\|_{\dot{H}^s}\leq C\|T_{D^4 u} D^{-4} v\|_{\dot{H}^s}.
\end{split}
\end{align*}
Moreover, if $u, Lu \in W^{\sigma,\infty}$ for an integer $\sigma \ge 0$, and $v \in \dot{H}^{s}$ with $s+\sigma > 0$, then
\bel{remd3}
\|LR(u,v)\|_{\dot H^{s+\sigma}}\leq C(\|u\|_{W^{\sigma,\infty}} + \|Lu\|_{W^{\sigma,\infty}})\|v\|_{\dot H^s},
\eeq
for some constant $C > 0$.
\end{lemma}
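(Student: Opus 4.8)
The plan is to apply Bony's decomposition \eqref{bony}, $uv = T_u v + T_v u + R(u,v)$, so that $L(uv) = L T_u v + L T_v u + L R(u,v)$, and to treat the three pieces separately; by the symmetry $u \leftrightarrow v$ it is enough to handle $L T_v u$ and $L R(u,v)$. For the para-product term I would pass to Fourier series: on the support of the cutoff in $T_v u$ one has $|\xi-\eta| \le \ve|\xi+\eta|$, which forces $|\xi-\eta| \le \delta|\eta|$ and $|\xi| \sim |\eta|$ with $\delta = 2\ve/(1-\ve) < 1$, so, writing $\zeta = \xi-\eta$, one has $\zeta/\eta > -1$ and $\log|\xi| = \log|\eta| + \log(1+\zeta/\eta)$. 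Expanding $\log(1+t) = t - \tfrac12 t^2 + \tfrac13 t^3 - \tfrac14 t^4 + \cdots$, convergent for $|t| \le \delta$, gives
\[
\log|\xi| = \log|\eta| + \frac{\zeta}{\eta} - \frac12\frac{\zeta^2}{\eta^2} + \frac13\frac{\zeta^3}{\eta^3} + r_4(\zeta/\eta), \qquad r_4(t) = O(t^4).
\]
Substituting into $\widehat{L T_v u}(\xi) = \log|\xi|\,\widehat{T_v u}(\xi)$ and using $\zeta^k\hat v(\zeta) = \widehat{D^k v}(\zeta)$, $\eta^{-k}\hat u(\eta) = \widehat{D^{-k} u}(\eta)$, the first four terms reproduce \emph{exactly} --- the cutoff $\chi$ being common to all of them --- the expression $T_v L u + T_{Dv} D^{-1} u - \tfrac12 T_{D^2 v} D^{-2} u + \tfrac13 T_{D^3 v} D^{-3} u$. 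Writing $r_4(t) = t^4 g(t)$ with $g$ bounded and smooth on $[-\delta,\delta]$, the leftover term is $T_{D^4 v} D^{-4} u$ with the cutoff modified by the bounded smooth factor $g((\xi-\eta)/\eta)$ of the frequency ratio, and hence, by the para-differential calculus, it satisfies the $\dot{H}^s$-bound defining $\O(T_{D^4 v} D^{-4} u)$. This yields the stated identity, and \eqref{remd3} is the only estimate that requires real care.

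For \eqref{remd3}, the crucial structural fact is that the symbol of Bony's remainder operator $R$ is supported where $|\xi-\eta| \sim |\eta|$ and $|\xi| \lesssim |\xi-\eta|$ --- the output frequency never exceeds a fixed multiple of either input frequency. Writing $\log|\xi| = \log|\xi-\eta| + (\log|\xi| - \log|\xi-\eta|)$ then splits $L R(u,v) = R(Lu,v) + \Psi(u,v)$, where $\Psi$ is the bilinear operator whose symbol is $(\log|\xi| - \log|\xi-\eta|)$ times that of $R$. The term $R(Lu,v)$ is bounded by $C\|Lu\|_{W^{\sigma,\infty}}\|v\|_{\dot{H}^s}$ directly from the Bony remainder estimate. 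For $\Psi$, I would decompose both $u$ and the output dyadically; on a block with $u$-frequency $\sim 2^j$ and output frequency $\sim 2^\ell$ (necessarily $\ell \le j + O(1)$) the symbol of $\Psi$ has size $\lesssim |j-\ell| + 1 \le j + 1$, so by the para-differential calculus the corresponding piece maps $L^\infty \times L^2 \to L^2$ with norm $\lesssim j + 1$. The factor $j+1$ is absorbed by the Bernstein-type bound $(j+1)\|\Delta_j u\|_{L^\infty} \lesssim 2^{-j\sigma}(\|u\|_{W^{\sigma,\infty}} + \|Lu\|_{W^{\sigma,\infty}})$, which holds because $L$ acts on frequency $\sim 2^j$ as multiplication by a quantity of size $\asymp j$; the dyadic pieces are then summed as in the proof of the Bony remainder estimate, the convergence of the sum using $s + \sigma > 0$, and \eqref{remd3} follows.

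The main obstacle is this last estimate. Unlike the para-product terms, where the Taylor expansion of the symbol of $L$ disposes of its logarithmic loss of regularity by an exact algebraic identity, on the resonant piece $R(u,v)$ that loss cannot be removed by a single identity and must instead be paid by transferring it onto the low-frequency factor $u$; this transfer is legitimate only because the output frequency is dominated by the input frequencies on the support of the symbol of $R$, and it is affordable only because of the extra hypothesis $Lu \in W^{\sigma,\infty}$. Tracking this mechanism carefully through the dyadic summation is the delicate point, while the remainder of the argument is routine para-differential bookkeeping.
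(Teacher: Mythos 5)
Your proof is correct and takes essentially the same route as the paper: Bony's decomposition, a Taylor expansion of $\log|\xi|$ about $\log|\eta|$ on the para-product support (where $|\xi-\eta|\lesssim\ve|\eta|$), and, for $LR(u,v)$, the comparability of the two input frequencies together with the domination of the output frequency on the support of the remainder symbol, which lets the logarithm be shifted onto $u$ at the price of the hypothesis $Lu\in W^{\sigma,\infty}$. Your dyadic splitting $LR(u,v)=R(Lu,v)+\Psi(u,v)$ with the factor $|j-\ell|+1$ absorbed via $Lu$ is just a more detailed execution of the paper's brief observation that $\log|\xi|\leq\log(1+M)+\log|\xi-\eta|$ and $|\xi|^s\leq C|\eta|^s$ on the support of $\rho$.
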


\begin{proof} Using Bony's decomposition \eqref{bony}, we only need to compute $L T_u v$ and $L R(u,v)$.

{\bf 1.} We shall prove that
\bel{lem-luv1}
LT_u v=T_u Lv+T_{Du}D^{-1}v-\frac12T_{D^2u}D^{-2}v+\frac13T_{D^3u}D^{-3}v+\O(T_{D^4 u} D^{-4} v).
\eeq
Indeed, by the definition of Weyl para-product, we have for $\xi\ne 0$ that
\begin{align}
\begin{split}
\F (LT_u v)(\xi)&= \frac1{2\pi} \log|\xi| \sum\limits_{\eta\in\Z_*}\chi\left(\frac{|\xi-\eta|}{|\xi+\eta|}\right)\hat u(\xi-\eta)\hat v(\eta)
\\
&= \frac1{2\pi} \sum\limits_{\eta\in\Z_*}\log|\xi-\eta+\eta|\chi\left(\frac{|\xi-\eta|}{|\xi+\eta|}\right)\hat u(\xi-\eta)\hat v(\eta).
\end{split}
\label{FLT}
\end{align}
If $(\xi,\eta)$ belongs to the support of $\chi({|\xi-\eta|}/{|\xi+\eta|})$, then
\begin{equation}
\left|\frac{\xi-\eta}{\eta}\right| \leq \frac{2\ve}{1-\ve}.
\label{ineq1}
\end{equation}
To prove this claim, we use the fact that
\begin{equation}
|\xi-\eta|\leq \ve|\xi+\eta|
\label{temp_ineq}
\end{equation}
on the support of $\chi({|\xi-\eta|}/{|\xi+\eta|})$ and consider two cases.
\begin{itemize}
\item If $|\xi+\eta|\leq|\eta|$, then $|\xi-\eta|\leq\ve |\eta|$, so
\[
\left|\frac{\xi-\eta}{\eta}\right|\leq\ve<\frac{2\ve}{1-\ve}.
\]
\item If $|\xi+\eta|>|\eta|$, then $\xi\eta>0$, so $|\xi-\eta| = \left||\xi| - |\eta|\right|$, and can we rewrite
\eqref{temp_ineq} as
\[
\left||\xi| - |\eta|\right|\leq \ve(|\xi|+|\eta|),
\]
which implies that
\[
\left(\frac{1-\ve}{1+\ve} \right)|\xi| \le |\eta|\leq \left(\frac{1+\ve}{1-\ve}\right)|\xi|,
\]
and \eqref{ineq1} follows in this case also.
\end{itemize}

Using the Taylor expansion
\[
\begin{aligned}
\log|\xi-\eta+\eta|&=\log|\eta|+\log\left|1+\frac{\xi-\eta}{\eta}\right|\\
&= \log|\eta|+\frac{\xi-\eta}{\eta}-\frac12\frac{(\xi-\eta)^2}{\eta^2}+\frac13\frac{(\xi-\eta)^3}{\eta^3}
+O\bigg(\frac{|\xi-\eta|^4}{\eta^4}\bigg)
\end{aligned}
\]
in \eqref{FLT}, we get that
\[
\begin{aligned}
\F & (L T_u v)(\xi) \\
&= \frac1{2\pi} \sum\limits_{\eta\in\Z_*} \bigg[\log|\eta|+\frac{\xi-\eta}{\eta}-\frac12\frac{(\xi-\eta)^2}{\eta^2}+\frac13\frac{(\xi-\eta)^3}{\eta^3}+O\bigg(\frac{|\xi-\eta|^4}{\eta^4}\bigg)\bigg] \chi\left(\frac{|\xi-\eta|}{|\xi+\eta|}\right)\hat u(\xi-\eta)\hat v(\eta)\\
&= \F \left[T_u L v+T_{Du}D^{-1}v-\frac12T_{D^2u}D^{-2}v+\frac13T_{D^3u}D^{-3}v+\O(T_{D^4 u} D^{-4} v)\right](\xi),
\end{aligned}
\]
which proves \eqref{lem-luv1}.

\begin{figure}
\centering
\includegraphics[scale=0.5]{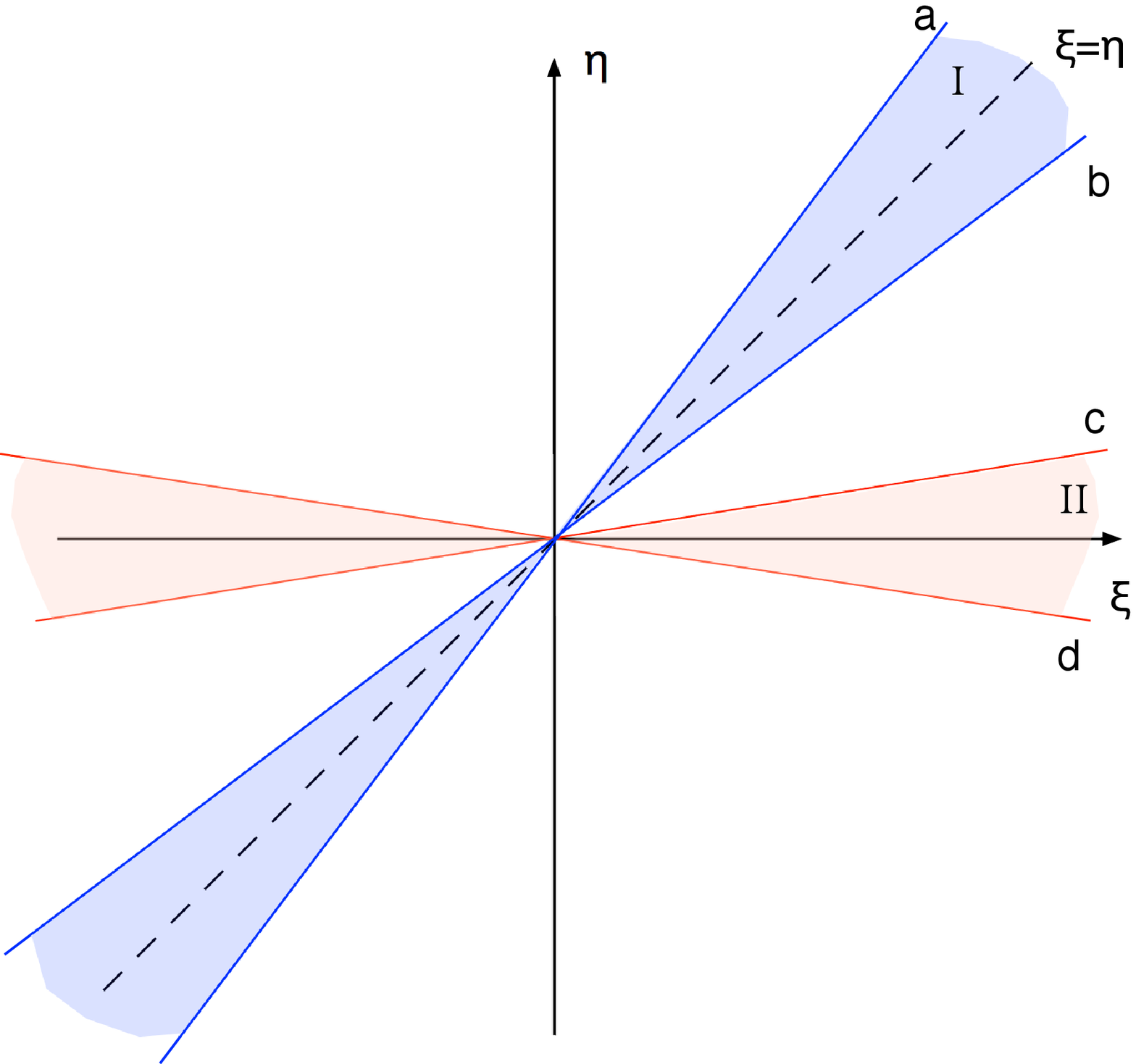}
\caption{\small Line a: $\eta=\frac{1+\ve'}{1-\ve'}\xi$. Line b: $\eta=\frac{1-\ve'}{1+\ve'}\xi$. Line c: $\eta=\frac{2\ve'}{1+\ve'}\xi$. Line d: $\eta=\frac{-2\ve'}{1-\ve'}\xi$. We have $\chi\left(\frac{|\xi-\eta|}{|\xi+\eta|}\right)=1$ in Region I, and $\chi\left(\frac{|\eta|}{|2\xi-\eta|}\right)=1$ in Region II. The support of $\rho(\xi,\eta)$ in \eqref{defrho} is contained
in the white region, where $\left|\frac{\xi-\eta}{\eta}\right|$ is bounded from above and away from zero. }
\label{fig-lemLuv}
\end{figure}

{\bf 2.} Next, we consider the remainder term $L R(u,v)$. For $\xi\ne 0$, we have
\begin{align}
\begin{split}
\F [LR(u,v)](\xi)&=\F [L(uv)-L(T_uv)-L(T_vu)](\xi)
\\
&= \frac1{2\pi} \log|\xi| \sum\limits_{\eta\in \Z_*} \rho(\xi,\eta)\hat u(\xi-\eta)\hat v(\eta),
\\
\rho(\xi,\eta) &= 1-\chi\left(\frac{|\xi-\eta|}{|\xi+\eta|}\right)-\chi\left(\frac{|\eta|}{|2\xi-\eta|}\right).
\end{split}
\label{defrho}
\end{align}
As illustrated in Figure \ref{fig-lemLuv}, there exist positive numbers $m, M > 0$ such that
\[
m\left|\xi-\eta\right|\leq|\eta|\leq M|\xi-\eta|
\]
for all $(\xi,\eta)$ in the support of $\rho(\xi,\eta)$, in which case
\begin{align*}
\log |\xi|&=\log |\xi-\eta+\eta|\leq \log [(1+M)|\xi-\eta|]=\log(1+M)+\log|\xi-\eta|,
\end{align*}
and $|\xi|^s\leq C|\eta|^s$. It follows that
\[
\|LR(u,v)\|_{\dot{H}^s}\leq C(\|u\|_{L^\infty} + \| Lu\|_{L^\infty})\|v\|_{\dot{H}^s}.
\]
Moreover, since $|\xi-\eta|$ and $|\eta|$ are comparable on the support of $\rho$, the remainder term also satisfies \eqref{remd3}
for any $\sigma \in \N$, which proves the Lemma.
\end{proof}

Setting $u=v$ in Lemma \ref{lem-luv}, we have the following corollary for $Lu^2$, which is of independent interest.
\begin{corollary}
If $u \in L^\infty \cap \dot{H}^s$ with $L u \in L^\infty$ and $s \ge 0$,
then there exists a constant $C > 0$ such that
\[
\|Lu^2-2uLu\|_{\dot{H}^s} \leq C (\|u\|_{L^\infty}+\|Lu\|_{L^\infty})\|u\|_{\dot{H}^s}.
\]
\end{corollary}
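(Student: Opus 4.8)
The plan is to derive the corollary directly from Lemma~\ref{lem-luv} by setting $u=v$ and collecting terms. With $u=v$, Bony's decomposition gives $u^2 = 2T_u u + R(u,u)$, and the identity in Lemma~\ref{lem-luv} for $L(u^2)$ becomes
\[
L(u^2) = 2T_u L u + 2T_{Du}D^{-1}u - T_{D^2u}D^{-2}u + \tfrac23 T_{D^3u}D^{-3}u + \O\bigl(T_{D^4u}D^{-4}u\bigr) + LR(u,u).
\]
On the other hand, $2uLu = 2\bigl(T_u Lu + T_{Lu} u + R(u,Lu)\bigr)$ by Bony's decomposition applied to the product $u\cdot Lu$. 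The term $2T_u Lu$ cancels between the two expressions, so
\[
L(u^2) - 2uLu = 2T_{Du}D^{-1}u - T_{D^2u}D^{-2}u + \tfrac23 T_{D^3u}D^{-3}u + \O\bigl(T_{D^4u}D^{-4}u\bigr) + LR(u,u) - 2T_{Lu}u - 2R(u,Lu).
\]

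Next I would bound each term on the right-hand side in $\dot{H}^s$. For the para-product terms of the form $T_{D^k u}D^{-k}u$ with $k=1,2,3$, I use the derivative-transfer estimate $\|T_{D^k u} v\|_{\dot H^s} \le C\|u\|_{L^\infty}\|v\|_{\dot H^{s+k}}$ stated in Section~\ref{sec-weyl} with $v = D^{-k}u \in \dot H^{s+k}$ (since $u\in\dot H^s$), giving $\|T_{D^k u}D^{-k}u\|_{\dot H^s}\le C\|u\|_{L^\infty}\|u\|_{\dot H^s}$; the $\O(T_{D^4u}D^{-4}u)$ term is handled identically with $k=4$. For $2T_{Lu}u$, since $Lu\in L^\infty$ we have $\|T_{Lu}u\|_{\dot H^s}\le C\|Lu\|_{L^\infty}\|u\|_{\dot H^s}$ by the basic para-product bound. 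For $LR(u,u)$, estimate \eqref{remd3} with $\sigma=0$ gives $\|LR(u,u)\|_{\dot H^s}\le C(\|u\|_{L^\infty}+\|Lu\|_{L^\infty})\|u\|_{\dot H^s}$. Finally, for $2R(u,Lu)$, the symmetry $R(u,Lu)=R(Lu,u)$ together with the remainder estimate $\|R(a,b)\|_{\dot H^s}\le C\|a\|_{L^\infty}\|b\|_{\dot H^s}$ (taking $\sigma=0$, $a=Lu\in L^\infty$, $b=u\in\dot H^s$; note $s\ge 0$ ensures $s+\sigma>0$) yields $\|R(u,Lu)\|_{\dot H^s}\le C\|Lu\|_{L^\infty}\|u\|_{\dot H^s}$. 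Summing these bounds produces the claimed inequality.

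I do not expect any genuine obstacle here, since the corollary is an essentially immediate specialization of Lemma~\ref{lem-luv}; the only point requiring minor care is the bookkeeping of the Bony decomposition of the \emph{second} product $2uLu = 2T_uLu + 2T_{Lu}u + 2R(u,Lu)$, so that the diagonal term $2T_uLu$ is correctly matched against the leading term $2T_uLu = 2T_vLu|_{v=u}$ coming out of Lemma~\ref{lem-luv}, leaving behind only lower-order para-products and remainders. A secondary subtlety is checking that the hypotheses of the cited para-product and remainder estimates are met under the stated assumptions $u\in L^\infty\cap\dot H^s$, $Lu\in L^\infty$, $s\ge 0$ — in particular that $Lu$ need not lie in any $\dot H^t$ for the term $R(u,Lu)$, which is why one puts $Lu$ in the low-frequency slot of the remainder and uses only its $L^\infty$-norm.
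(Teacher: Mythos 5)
Your proposal is correct and follows essentially the same route as the paper: specialize Lemma~\ref{lem-luv} at $u=v$, Bony-decompose $2uLu=2T_uLu+2T_{Lu}u+2R(Lu,u)$, cancel the common term $2T_uLu$, and bound the leftover para-products via the derivative-transfer estimate together with \eqref{remd3} and the Bony remainder bound. Your version is in fact slightly more explicit than the paper's (which compresses the tail terms and $LR(u,u)$ into a single $\O$-term), and the only caveat — that the remainder estimate with $\sigma=0$ formally needs $s+\sigma>0$, i.e.\ $s>0$ rather than $s\ge 0$ — is a borderline issue already present in the paper's own statement.
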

\begin{proof}
By Lemma \ref{lem-luv}, we have that
\[
L(u^2)=2T_u Lu+2T_{Du} D^{-1}u +\O(T_{D^2u}D^{-2}u)
\]
and
\[
2uLu=2T_u Lu+2T_{Lu} u+2R(Lu, u).
\]
Taking the difference of above two equations yields
\[
\begin{aligned}
\|Lu^2-2uLu\|_{\dot{H}^s}&= \|2T_{Du} D^{-1}u +\O(T_{D^2u}D^{-2}u)-2T_{Lu} u-2R(Lu, u)\|_{\dot{H}^s}\\
&\leq C (\|u\|_{L^\infty}+\|Lu\|_{L^\infty})\|u\|_{\dot{H}^s}.
\end{aligned}
\]
\end{proof}

The following lemma gives an expansion of $L(uvw)$ and an estimates of the remainder terms.

\begin{lemma}\label{lem-luvw}
If $u, v, w \in W^{3,\infty} \cap \dot{H}^s$, with $s \geq 0$, then
\[
\begin{aligned}
 L(uvw)=\sum\limits_{u,v,w}T_vT_w Lu+ (T_{Dv} T_w +T_v T_{Dw})D^{-1} u-\frac12[T_{D^2v} T_w+T_{D^2w} T_v&+2T_{Dv}T_{Dw}]D^{-2}u\\
&\quad +\mbox{remainder},
\end{aligned}
\]
where the summation is cyclic over $u,v,w$, and the remainder terms satisfy
\[
\|\mbox{remainder}\|_{\dot{H}^{s+2}}\leq C\left(\|u\|_{W^{3,\infty}}+\|v\|_{W^{3,\infty}}+\|w\|_{W^{3,\infty}}\right)^2(\|u\|_{\dot{H}^s}+\|v\|_{\dot{H}^s}
+\|w\|_{\dot{H}^s}),
\]
for some constant $C > 0$.
\end{lemma}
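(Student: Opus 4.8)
The plan is to follow the same scheme as the proof of Lemma~\ref{lem-luv}, working on the Fourier side. Write
\[
\widehat{L(uvw)}(\xi)=\frac{\lambda(\xi)}{(2\pi)^2}\sum_{\eta_1+\eta_2+\eta_3=\xi}\hat u(\eta_1)\hat v(\eta_2)\hat w(\eta_3),
\]
and split the frequency sum into three cyclic regions in which one of $\eta_1,\eta_2,\eta_3$ strictly dominates the other two, together with a ``balanced'' region in which the two largest frequencies are comparable. Consider first the region in which $\eta_1$ dominates, so $|\eta_2|,|\eta_3|\ll|\eta_1|\sim|\xi|$; there the two nested frequency cutoffs reduce, up to a harmless error supported in the balanced region, to the symbol of a composition of two Weyl para-products acting on $u$. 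Setting $\mu=\eta_2+\eta_3$ and Taylor-expanding as in Lemma~\ref{lem-luv}, but now only to second order (the cubic term already being of remainder quality),
\[
\log|\eta_1+\mu|=\log|\eta_1|+\frac{\mu}{\eta_1}-\frac12\frac{\mu^2}{\eta_1^2}+O\!\left(\frac{|\mu|^3}{|\eta_1|^3}\right),
\]
and using $\mu^2=\eta_2^2+2\eta_2\eta_3+\eta_3^2$, the quadratic term is what produces the cross term $2T_{Dv}T_{Dw}D^{-2}u$. Identifying $\log|\eta_1|\,\hat u(\eta_1)$ with $\widehat{Lu}$, $\eta_2\hat v(\eta_2)$ with $\widehat{Dv}$, $\eta_1^{-1}\hat u(\eta_1)$ with $\widehat{D^{-1}u}$, and so on, this region contributes precisely
\[
T_vT_wLu+\big(T_{Dv}T_w+T_vT_{Dw}\big)D^{-1}u-\tfrac12\big(T_{D^2v}T_w+2T_{Dv}T_{Dw}+T_{D^2w}T_v\big)D^{-2}u
\]
plus a Taylor remainder of size $O(|\mu|^3/|\eta_1|^3)$. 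Summing cyclically over which variable dominates gives the three displayed groups in the statement.

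It then remains to estimate the collected remainder, which consists of (i) the contribution of the balanced region and (ii) the three cyclic Taylor remainders. For (ii): by Leibniz's rule each $O(|\mu|^3/|\eta_1|^3)$ term is a finite sum of terms of the form $T_{D^jv}T_{D^kw}D^{-(j+k)}u$ with $j+k\geq3$, together with its cyclic permutations. By the para-product bounds $\|T_af\|_{\dot{H}^r}\leq C\|a\|_{L^\infty}\|f\|_{\dot{H}^r}$ and $\|T_{D^\sigma a}f\|_{\dot{H}^r}\leq C\|a\|_{L^\infty}\|f\|_{\dot{H}^{r+\sigma}}$ recalled in Section~\ref{sec-weyl}, together with $\dot{H}^{s+k}\subset\dot{H}^{s+2}$ for $k\geq2$ on the zero-mean torus, each such term lies in $\dot{H}^{s+2}$ with norm bounded by $C\|u\|_{\dot{H}^s}\|v\|_{W^{3,\infty}}\|w\|_{W^{3,\infty}}$: one places up to three derivatives directly onto a low-frequency factor, which is exactly where the $W^{3,\infty}$ hypothesis is used, and in the single case $j+k=4$ where a fourth derivative would be required, one transfers one derivative onto the already $D^{-4}$-smoothed high-frequency factor $u$. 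For (i): in the balanced region the two largest frequencies $|\eta_{(1)}|\sim|\eta_{(2)}|$ dominate $|\xi|$, and since $\log|\xi|\leq\log\big(C|\eta_{(1)}|\big)\leq C_\delta|\eta_{(1)}|^\delta$ for every $\delta>0$, the multiplier obeys $|\xi|^{s+2}\lambda(\xi)\lesssim|\eta_{(1)}|^{s}|\eta_{(2)}|^{2+\delta}$; since $W^{3,\infty}$-functions with zero mean belong to $\dot{H}^3\subset\dot{H}^{2+\delta}$, a standard bilinear para-product estimate gives the bound $C\big(\|u\|_{W^{3,\infty}}+\|v\|_{W^{3,\infty}}+\|w\|_{W^{3,\infty}}\big)^2\big(\|u\|_{\dot{H}^s}+\|v\|_{\dot{H}^s}+\|w\|_{\dot{H}^s}\big)$. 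Combining (i) and (ii) yields the asserted remainder estimate. An alternative to the direct Fourier computation is to apply Lemma~\ref{lem-luv} to the pair $(u,vw)$ and then re-expand $T_{P(vw)}$ via Leibniz into compositions $T_{Pv}T_w+\cdots$ and re-apply Lemma~\ref{lem-luv} to $L(vw)$; this reuses the previous lemma directly, but trades the frequency bookkeeping for bookkeeping of para-product composition and commutator errors, which is arguably messier.

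The main obstacle is purely organizational: keeping track of which Taylor and Leibniz contributions are retained and which are discarded into the remainder, pinning down the correct coefficients --- in particular the factor $2$ multiplying $T_{Dv}T_{Dw}$, which arises solely from the binomial middle term of $\mu^2$ --- and verifying that \emph{every} discarded term genuinely gains two derivatives despite the logarithmic factor $\lambda(\xi)$. The facts that make this work are that the discarded Taylor terms already carry the smoothing $D^{-3}$ or better, and that a logarithm is dominated by an arbitrarily small positive power of frequency, so the logarithmic loss is sublinear and is absorbed by the two-derivative gain; the only delicate point is that with merely $W^{3,\infty}$ regularity one cannot place all four derivatives of a quartic Taylor term on a low-frequency factor and must shift one onto the high-frequency factor, which has surplus smoothing. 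Making the frequency partition precise and checking that the harmless error from replacing the nested cutoffs by the composition symbol indeed lands in the remainder is the remaining piece of routine verification.
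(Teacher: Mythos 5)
Your proposal is correct in outline, but it takes a genuinely different route from the paper. You compute the trilinear symbol of $L(uvw)$ directly: partition the frequency sum according to which of the three frequencies dominates, Taylor-expand $\log|\eta_1+\mu|$ (with $\mu=\eta_2+\eta_3$) to second order in the dominant region, read off the composed para-products, and estimate the balanced region using $\log|\xi|\lesssim_\delta|\xi|^\delta$ plus a weighted Young/Cauchy--Schwarz bound. The paper instead applies Lemma~\ref{lem-luv} to the pair $(u,vw)$ and then converts $T_{vw}Lu$, $T_{D(vw)}D^{-1}u$, $T_{D^2(vw)}D^{-2}u$ into compositions $T_vT_wLu$, etc., using the composition bound $\|T_{vw}-T_vT_w\|_{\dot H^s\to\dot H^{s+\sigma}}\leq C(\|v\|_{W^{\sigma,\infty}}\|w\|_{L^\infty}+\|v\|_{L^\infty}\|w\|_{W^{\sigma,\infty}})$ together with Bony expansions of $D^{-1}(vw)$ and $D^{-2}(vw)$, collecting all errors ($\Rc_1$--$\Rc_5$ and $LR(u,vw)$) in $\dot H^{s+2}$. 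Your route is self-contained and makes the coefficients transparent --- in particular the cross term $2T_{Dv}T_{Dw}D^{-2}u$ coming from the middle term of $\mu^2$ --- but it forces you to redo, at the trilinear level, the cutoff/support analysis that Lemma~\ref{lem-luv} already encapsulates; the paper's route reuses the bilinear lemma so the only new ingredient is the composition estimate, at the cost of a second layer of remainder bookkeeping. Two small inaccuracies in your write-up, neither fatal: the second-order Taylor remainder is not literally a finite sum of monomial terms $T_{D^jv}T_{D^kw}D^{-(j+k)}u$ (it must be majorized exactly as the paper handles its $\O(\cdot)$ terms), and no $j+k=4$ case actually arises once you stop the expansion at second order with remainder $O(|\mu|^3/|\eta_1|^3)$ --- the $j+k=3$ contributions are already controlled by $W^{3,\infty}$ of the low-frequency factors without transferring any derivative onto $u$.
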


\begin{proof}
By Lemma \ref{lem-luv},
\bel{lem-luvw1}
\begin{aligned}
L[u(vw)]=& T_{vw}Lu+T_{D(vw)}D^{-1} u-\frac12T_{D^2(vw)} D^{-2}u+\O(T_{D^3(vw)} D^{-3}u)\\
& +T_u L(vw)+T_{Du}D^{-1}(vw)-\frac12T_{D^2u}D^{-2}(vw)+\O(T_{D^3u}D^{-3}(vw))\\
&+LR(u, vw),
\end{aligned}
\eeq
with
\[
\|LR(u, vw)\|_{\dot H^{s+2}}\leq C(\|u\|_{W^{2,\infty}} + \|Lu\|_{W^{2,\infty}})\|vw\|_{\dot H^s},
\]
where $\|Lu\|_{W^{2,\infty}} \le C\|u\|_{W^{3,\infty}}$ and
$\|vw\|_{\dot H^s} \le C\left(\|v\|_{L^\infty}\|w\|_{\dot H^s}+ \|w\|_{L^\infty}\|v\|_{\dot H^s}\right)$.

Using the fact that
\[
\|T_{vw}-T_vT_w\|_{\dot{H}^s \to \dot{H}^{s+\sigma}}\leq C(\|v\|_{W^{\sigma,\infty}}\|w\|_{L^\infty}
+\|v\|_{L^\infty}\|w\|_{W^{\sigma,\infty}}),
\]
and denoting the remainder terms by $\Rc_i$, we can expand each term in the above equation to get
\begin{align}
\begin{split}
T_{vw} Lu &=T_vT_w Lu+\Rc_1,\\
T_{D(vw)}D^{-1}u &=(T_{Dv}T_w +T_{v}T_{Dw})D^{-1}u+\Rc_2,\\
T_{D^2(vw)} D^{-2}u &=[T_{D^2v} T_w+T_{D^2w} T_v+2T_{Dv} T_{Dw}]D^{-2}u+\Rc_3,\\
D^{-1}(vw)&=D^{-1}(T_v w+T_wv+R(v,w))\\
&= T_vD^{-1}w-T_{Dv}D^{-2}w+\O(T_{D^2v}D^{-3}w)\\
&+T_wD^{-1}v-T_{Dw}D^{-2}v+\O(T_{D^2w}D^{-3}v)+\Rc_4,\\
D^{-2}(vw)&=D^{-2}(T_v w+T_wv+R(v,w))\\
&= T_vD^{-2}w-2T_{Dv}D^{-3}w+\O(T_{D^2v}D^{-4}w)\\
&+T_wD^{-2}v-2T_{Dw}D^{-3}v+\O(T_{D^2w}D^{-4}v)+\Rc_5,
\end{split}
\label{lem-luvw2}
\end{align}
with
\[
\begin{aligned}
\|\Rc_i\|_{\dot{H}^{s+2}} & \leq C \|v\|_{W^{3,\infty}}\|w\|_{W^{3,\infty}}\|u\|_{\dot{H}^s}, \quad \text{for}\ i = 1, 2, 3,\\
\|\Rc_4\|_{\dot{H}^{s+2}} & \leq C \|v\|_{W^{1,\infty}}\|w\|_{\dot{H}^s} + \|w\|_{W^{1,\infty}}\|v\|_{\dot{H}^s},\\
\|\Rc_5\|_{\dot{H}^{s+2}} &\leq \|v\|_{L^{\infty}}\|w\|_{\dot{H}^s} + \|w\|_{L^{\infty}}\|v\|_{\dot{H}^s}.
\end{aligned}
\]
Then the lemma is proved by substituting \eqref{lem-luvw2} into \eqref{lem-luvw1}.
\end{proof}

\begin{lemma}\label{lem-DsT}
If $u, v \in \D'(\T)$ and $s\in \R$, then
\[
|D|^sT_u v=T_u |D|^sv+sT_{Du}|D|^{s-2}Dv+\frac{s(s-1)}2 T_{|D|^2u}|D|^{s-2}v+\O(T_{|D|^3 u} |D|^{s-3} v).
\]
\end{lemma}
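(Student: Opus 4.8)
The plan is to mimic part~1 of the proof of Lemma~\ref{lem-luv}, replacing the logarithm by the power function and carrying the symbol expansion one order further (the statement keeps the second-order term, with coefficient $s(s-1)/2$). First I would compute, directly from the definition of the Weyl para-product, that for $\xi\ne 0$
\[
\F(|D|^s T_u v)(\xi)
=\frac1{2\pi}\,|\xi|^s\sum_{\eta\in\Z_*}\chi\!\left(\frac{|\xi-\eta|}{|\xi+\eta|}\right)\hat u(\xi-\eta)\hat v(\eta)
=\frac1{2\pi}\sum_{\eta\in\Z_*}|\xi-\eta+\eta|^s\,\chi\!\left(\frac{|\xi-\eta|}{|\xi+\eta|}\right)\hat u(\xi-\eta)\hat v(\eta),
\]
the case $\xi=0$ being trivial since $\chi(1)=0$ makes every term on both sides vanish there.

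Next I would restrict to $(\xi,\eta)$ in the support of $\chi(|\xi-\eta|/|\xi+\eta|)$. There, by the same two-case analysis used to establish \eqref{ineq1}, we have $|(\xi-\eta)/\eta|\le 2\ve/(1-\ve)$, and since $0<\ve\ll1$ (so $\ve<1/3$) this bound is $<1$; writing $t=(\xi-\eta)/\eta$ we get $1+t=\xi/\eta\in(0,2)$, hence $|\xi|^s=|\eta|^s(1+t)^s$ with $r\mapsto(1+r)^s$ smooth on the fixed interval $[-2\ve/(1-\ve),2\ve/(1-\ve)]$. Taylor's theorem with remainder then gives, uniformly in $(\xi,\eta)$ on this support,
\[
|\xi|^s=|\eta|^s\left[1+s\,\frac{\xi-\eta}{\eta}+\frac{s(s-1)}{2}\,\frac{(\xi-\eta)^2}{\eta^2}+O\!\left(\frac{|\xi-\eta|^3}{|\eta|^3}\right)\right],
\]
with the implicit constant depending only on $s$ and $\ve$.

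The last step is bookkeeping on the Fourier side: using $|\eta|^s/\eta=|\eta|^{s-2}\eta$ and $|\eta|^s/\eta^2=|\eta|^{s-2}$, the four terms in the bracket are exactly the Fourier symbols of $T_u|D|^s v$, of $sT_{Du}|D|^{s-2}Dv$ (here $\xi-\eta$ is the symbol of $D$ acting on the low-frequency input $u$), of $\tfrac{s(s-1)}{2}T_{|D|^2 u}|D|^{s-2}v$, and of a bounded function of $(\xi,\eta)$ times the symbol $|\xi-\eta|^3|\eta|^{s-3}$ of $T_{|D|^3 u}|D|^{s-3}v$, which by definition of the $\O(\cdot)$ notation is $\O(T_{|D|^3 u}|D|^{s-3}v)$. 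Substituting back into the sum over $\eta$ and comparing Fourier coefficients proves the identity. I do not expect a genuine obstacle here: the only points needing care are the sign-and-size estimate that legitimizes $|\xi|^s=|\eta|^s(1+t)^s$ with $|t|<1$ on the support of $\chi$ (essentially already done in Lemma~\ref{lem-luv}), and the remark that this is a pure symbol identity, hence valid for arbitrary $u,v\in\D'(\T)$ with no regularity hypotheses, just as in the first part of Lemma~\ref{lem-luv}.
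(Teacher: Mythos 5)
Your proposal is correct and follows essentially the same route as the paper's proof: expand $|\xi|^s=|\eta|^s\left|1+\tfrac{\xi-\eta}{\eta}\right|^s$ by Taylor's theorem on the support of $\chi$, using the bound \eqref{ineq1} established in Lemma~\ref{lem-luv}, and identify the resulting symbols with $T_u|D|^s v$, $sT_{Du}|D|^{s-2}Dv$, $\tfrac{s(s-1)}{2}T_{|D|^2u}|D|^{s-2}v$, and the $\O(T_{|D|^3u}|D|^{s-3}v)$ remainder. The extra remarks you add (the trivial case $\xi=0$, the positivity of $1+t$, and the observation that this is a pure symbol identity valid for distributions) are consistent with, and slightly more explicit than, the paper's argument.
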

\begin{proof}
By the definition of Weyl para-product
\[
\begin{aligned}
\F (|D|^sT_u v)(\xi)&= \frac1{2\pi} |\xi|^s \sum\limits_{\eta\in\Z_*}\chi\left(\frac{|\xi-\eta|}{|\xi+\eta|}\right)\hat u(\xi-\eta)\hat v(\eta)
\\
&= \frac1{2\pi} \sum\limits_{\eta\in\Z_*}|\xi-\eta+\eta|^s\chi\left(\frac{|\xi-\eta|}{|\xi+\eta|}\right)\hat u(\xi-\eta)\hat v(\eta).
\end{aligned}
\]
As in the proof of Lemma \ref{lem-luv}, we have on the support of $\chi\left({|\xi-\eta|}/{|\xi+\eta|}\right)$ that
\[
\left|\frac{\xi-\eta}{\eta}\right| \leq \frac{2\ve}{1-\ve},
\]
and, using the Taylor expansion
\[
\begin{aligned}
|\xi-\eta+\eta|^s&=|\eta|^s\left|1+\frac{\xi-\eta}{\eta}\right|^s\\
&= |\eta|^s\left(1+s\frac{\xi-\eta}{\eta}+\frac{s(s-1)}2\frac{(\xi-\eta)^2}{\eta^2}+O\bigg(\frac{|\xi-\eta|^3}{\eta^3}\bigg)\right)
\end{aligned}
\]
in the expression for $\F(|D|^s T_u v)$, we get
\[
\begin{aligned}
\F & (|D|^s T_u v)(\xi) \\
&= \frac1{2\pi} \sum\limits_{\eta\in\Z_*} |\eta|^s\left(1+s\frac{\xi-\eta}{\eta}+\frac{s(s-1)}2\frac{(\xi-\eta)^2}{\eta^2}+O\bigg(\frac{|\xi-\eta|^3}{\eta^3}\bigg)\right)
\chi\left(\frac{|\xi-\eta|}{|\xi+\eta|}\right)\hat u(\xi-\eta)\hat v(\eta)\\
&= \F \left[T_u |D|^sv+sT_{Du}|D|^{s-2}Dv+\frac{s(s-1)}2 T_{|D|^2u}|D|^{s-2}v+\O(T_{|D|^3 u} |D|^{s-3} v)\right](\xi),
\end{aligned}
\]
which proves the lemma.
\end{proof}

\section{Bony decomposition of the equation}\label{sec-simeqn}

In this section, we carry out a Bony decomposition of the approximate SQG front equation
\begin{align}
\label{sqgeq}
\begin{split}
&\varphi_t + \frac 12 \partial_x \bigg\{\varphi^2 L \varphi_{xx}
- \varphi L (\varphi^2)_{xx} + \frac 13 L (\varphi^3)_{xx}\bigg\}
= 2 L \varphi_x,
\end{split}
\end{align}
where $L = \log|\partial_x|$, to put it in a form that allows us to make
weighted energy estimates. This form makes explicit the cancelation of second-order derivatives in the flux
and extracts a nonlinear term $L(T_{\vp_x}^2\vp)$ from the flux that is responsible for the logarithmic loss of derivatives in the dispersionless equation.

In the following, we use $\P(\cdot)$ to denote a nondecreasing polynomial, which might change from line to line.

\begin{lemma}\label{bony-sqg}
Suppose that $\vp(\cdot,t) \in \dot{H}^s(\T)$ with $s > 7/2$. Then \eqref{sqgeq} can be written as
\bel{SEq}
\vp_t+\px \left\{ \frac12T_{B(\vp)} \vp+[T_{\vp_x}, T_\vp]\vp_x \right\}+\Rc_7=L[(2-T_{\vp_x}^2)\vp]_x
\eeq
where
\begin{align}
B(\vp) &= \vp_x^2-3\vp\vp_{xx}-2\vp_{xx}L\vp-4\vp_xL\vp_x,
\label{defB}
\end{align}
and the remainder term $\Rc_7$ satisfies the estimate
\begin{equation}
\|\Rc_7\|_{\dot{H}^s}  \leq \P\left(\|\vp\|_{\dot{H}^s}\right)
\label{r7est}
\end{equation}
for a nondecreasing polynomial $\P$.
\end{lemma}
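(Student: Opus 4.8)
The plan is to expand the flux in \eqref{sqgeq} by writing each occurrence of $L$ acting on a product or triple product using the para-differential expansions in Lemmas~\ref{lem-luv} and~\ref{lem-luvw}, and to track which terms are genuinely of top order (i.e.\ of the same order as $L\vp_{xx}$ modulo the weighted energy) and which fall into the remainder $\Rc_7$. First I would note that $\vp^2 L\vp_{xx}-\vp L(\vp^2)_{xx}+\tfrac13 L(\vp^3)_{xx}$ is, up to a factor, the full expansion of the commutator identity $2[L,\vp]\vp_x^2+[[L,\vp],\vp]\vp_{xx}$ quoted in Section~\ref{sec-sqg}; I would verify this algebraically by expanding $L(\vp^2)_{xx}=L(\vp\vp)_{xx}$ and $L(\vp^3)_{xx}=L(\vp\vp\vp)_{xx}$ via Leibniz and collecting, so that all terms in which $L$ hits three derivatives of a single factor cancel. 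This is the crucial structural cancellation that prevents an actual loss of two derivatives; what survives is one commutator $[L,\vp]$ times $\vp_x^2$ plus a double commutator times $\vp_{xx}$, which morally lose only ``$\log$'' and ``nothing'' respectively.

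Next I would apply the para-product expansions to $[L,\vp]\vp_x^2$ and $[[L,\vp],\vp]\vp_{xx}$. Using Lemma~\ref{lem-luv} on $L(\vp\cdot w)$ with $w=\vp_x^2$ (Bony-decomposed as $w=2T_{\vp_x}\vp_x+R(\vp_x,\vp_x)$) and then Lemma~\ref{lem-luvw} where triple products appear, the leading contribution to $[L,\vp]\vp_x^2$ should be $2L(T_{\vp_x}^2\vp)$ modulo terms one can place in $\Rc_7$; the subleading terms from the Taylor expansion of $\log|\eta+(\xi-\eta)|$ — the $T_{D\vp}D^{-1}$, $T_{D^2\vp}D^{-2}$, \dots\ pieces — contribute the $\vp_{xx}L\vp$ and $\vp_xL\vp_x$ terms in $B(\vp)$ after commuting $T$'s and paying attention to which derivative count is being transferred. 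Likewise the double commutator $[[L,\vp],\vp]\vp_{xx}$ is one order smoother than a naive estimate (the two commutators each gain essentially a derivative's worth), and its leading piece contributes the polynomial-in-$\vp$, non-$L$ part of $B(\vp)$, namely $\vp_x^2-3\vp\vp_{xx}$, together with the symmetrized operator $[T_{\vp_x},T_\vp]\vp_x$ that records the order in which the two para-commutators act; everything else goes into $\Rc_7$. One then pulls the common outer $\px$ out front and uses the linear dispersive term $2L\vp_x$ together with the extracted $-L(T_{\vp_x}^2\vp)$ (note $\tfrac12\cdot 2$ from the $\tfrac12\px\{\cdots\}$ prefactor) to assemble $L[(2-T_{\vp_x}^2)\vp]_x$ on the right-hand side.

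The remainder estimate \eqref{r7est} would then be assembled additively from the remainder bounds already proved: each $\O(T_{D^k u}D^{-k}v)$ term with $k\ge 3$ or $4$ gains enough negative derivatives on the high-frequency factor to be controlled in $\dot H^s$ by $\|\vp\|_{W^{\sigma,\infty}}$-type norms times $\|\vp\|_{\dot H^s}$, and since $s>7/2$ Sobolev embedding gives $\|\vp\|_{W^{3,\infty}}\le C\|\vp\|_{\dot H^s}$ (we need $s>7/2$ precisely so that three classical derivatives of $\vp$, and hence $L\vp$ up to $W^{2,\infty}$, are bounded — this is where the threshold comes from); the $LR(u,v)$ and $LR(u,vw)$ terms are handled by the sharper bound \eqref{remd3}, again using $\|L\vp\|_{W^{2,\infty}}\le C\|\vp\|_{W^{3,\infty}}$. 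Collecting, every remainder is bounded by a fixed power of $\|\vp\|_{\dot H^s}$, yielding the nondecreasing polynomial $\P$. I expect the main obstacle to be bookkeeping: correctly matching the coefficients $\tfrac12$, $3$, $2$, $4$ in $B(\vp)$ and the exact form of $[T_{\vp_x},T_\vp]\vp_x$, since these come from combining the Taylor coefficients $1,-\tfrac12,\tfrac13$ in Lemma~\ref{lem-luv} across the three flux terms with the combinatorial factors from expanding $(\vp^2)_{xx}$ and $(\vp^3)_{xx}$ — a sign or factor error anywhere propagates. A secondary subtlety is ensuring that each term I discard into $\Rc_7$ really does land in $\dot H^s$ (not merely $\dot H^{s-\delta}$) with a polynomial bound, which forces careful attention to exactly how many derivatives each para-product expansion transfers from high to low frequency.
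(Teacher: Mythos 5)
Your proposal takes essentially the same route as the paper's proof: the paper likewise expands the Leibniz form of the flux (which is exactly the commutator identity $2[L,\vp]\vp_x^2+[[L,\vp],\vp]\vp_{xx}$ you quote), applies Lemmas~\ref{lem-luv} and~\ref{lem-luvw} to each piece, observes the cancellation of the $L\vp_{xx}$ and $L\vp_x$ terms, extracts the top-order term $2T_{\vp_x}^2L\vp$ (equivalently $2L(T_{\vp_x}^2\vp)$ after a commutator estimate) to combine with the dispersive term, and controls all remainders via the $\dot H^s\hookrightarrow W^{3,\infty}$ embedding for $s>7/2$. The only caveats are bookkeeping slips of the kind you yourself flag — $[L,\vp]\vp_x^2$ alone carries $L(T_{\vp_x}^2\vp)$, with the external factor $2$ supplying the rest, and the $-2\vp_{xx}L\vp-4\vp_x L\vp_x$ terms of $B(\vp)$ arise from the $T_uLv$-type ``swap'' terms rather than from the Taylor corrections $T_{D\vp}D^{-1},T_{D^2\vp}D^{-2},\dots$ — neither of which changes the method.
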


\begin{proof}
The nonlinear flux term in \eqref{sqgeq} is given by
\[
\varphi^2 L \varphi_{xx}
- \varphi L (\varphi^2)_{xx} + \frac 13 L (\varphi^3)_{xx}
= \vp^2 L \vp_{xx}- 2\vp L (\vp\vp_{xx}+\vp_x^2)+ L (\vp^2 \vp_{xx}+2\vp \vp_x^2).
\]
We will use the lemmas from last section to expand this term.

\noindent{\bf 1.  Term $L(\vp\vp_{xx}+\vp_x^2)$.}

By Lemma \ref{lem-luv}, we have that
\[
\begin{aligned}
L(\vp\vp_{xx})&=T_\vp L\vp_{xx}+T_{D\vp}D^{-1}\vp_{xx}-\frac12 T_{D^2\vp}D^{-2}\vp_{xx}+T_{\vp_{xx}}L\vp+\Rc_1,\\
L(\vp_x^2)&=2T_{\vp_x} L\vp_x+2 T_{D\vp_x}D^{-1}\vp_x+\Rc_2,
\end{aligned}
\]
with
\[
\begin{aligned}
\|\Rc_1\|_{\dot{H}^{s+1}} &\leq C \|\vp\|_{W^{3,\infty}}\|\vp\|_{\dot{H}^s},\qquad
\|\Rc_2\|_{\dot{H}^{s+1}} &\leq C \|\vp\|_{W^{3,\infty}}\|\vp\|_{\dot{H}^s}.
\end{aligned}
\]

\noindent{\bf 2. Term $L(\vp^2\vp_{xx})$.}

By Lemma \ref{lem-luvw}, we have that
\[\begin{aligned}
L(\vp^2\vp_{xx}) & = T_\vp T_\vp L\vp_{xx}+ 2T_{\vp}T_{\vp_{xx}} L\vp+2T_{D\vp}T_\vp D^{-1}\vp_{xx}-\frac{1}2 [2T_{D^2\vp}T_\vp+2T_{D\vp}T_{D\vp}]D^{-2}\vp_{xx}+\Rc_3,\\
& = T_\vp T_\vp L\vp_{xx}+ 2T_{\vp}T_{\vp_{xx}} L\vp+2T_{D\vp}T_\vp D^{-1}\vp_{xx}-[T_{D^2\vp}T_\vp+T_{D\vp}T_{D\vp}]D^{-2}\vp_{xx}+\Rc_3,
\end{aligned}
\]
with
\[
\|\Rc_3\|_{\dot{H}^{s+1}} \leq C \|\vp\|_{W^{3,\infty}}^2\|\vp\|_{\dot{H}^s}.
\]

\noindent{\bf 3. Term $L(\vp\vp_x^2)$.}

By Lemma \ref{lem-luvw}, we have that
\[
L(\vp\vp_x^2)= 2T_\vp T_{\vp_x} L\vp_x+T_{\vp_x} T_{\vp_x} L\vp+ 2(T_{D\vp}T_{\vp_x}+T_\vp T_{D\vp_x})D^{-1}\vp_x+\Rc_4,
\]
with
\[
\|\Rc_4\|_{\dot{H}^{s+1}} \leq C \|\vp\|_{W^{3,\infty}}^2\|\vp\|_{\dot{H}^s}.
\]

\noindent{\bf 4. Term $\vp^2L\vp_{xx}$.}

By Bony's decomposition, we can express $\vp^2L\vp_{xx}$ as
\[
\vp^2L\vp_{xx}= T_\vp T_\vp L\vp_{xx}+ 2 T_\vp T_{L\vp_{xx}} \vp+\Rc_5,
\]
with
\[
\|\Rc_5\|_{\dot{H}^{s+1}} \leq C \|L\vp\|_{W^{3,\infty}}^2\|\vp\|_{\dot{H}^s}.
\]
Collecting all the above expressions, we obtain that
\[
\begin{aligned}
\vp^2 L & \vp_{xx}- 2\vp L (\vp\vp_{xx}+\vp_x^2)+ L (\vp^2 \vp_{xx}+2\vp \vp_x^2)\\
&= T_\vp T_\vp L\vp_{xx}+ 2 T_\vp T_{L\vp_{xx}} \vp-2\vp \bigg[T_\vp L\vp_{xx}+T_{D\vp}D^{-1}\vp_{xx}-\frac12 T_{D^2\vp}D^{-2}\vp_{xx}\\
&\qquad+T_{\vp_{xx}}L\vp + 2T_{\vp_x} L\vp_x+2 T_{D\vp_x}D^{-1}\vp_x\bigg] +T_\vp T_\vp L\vp_{xx}+ 2T_{\vp}T_{\vp_{xx}} L\vp\\
&\qquad +2T_{D\vp}T_\vp D^{-1}\vp_{xx}-\big[T_{D^2\vp}T_\vp+T_{D\vp}T_{D\vp}\big]D^{-2}\vp_{xx}+4T_\vp T_{\vp_x} L\vp_x\\
&\qquad+2T_{\vp_x} T_{\vp_x} L\vp+ 4(T_{D\vp}T_{\vp_x}+T_\vp T_{D\vp_x})D^{-1}\vp_x+ \mathfrak{R}\\
&= T_\vp T_\vp L\vp_{xx}+ 2 T_\vp T_{L\vp_{xx}} \vp-2T_\vp \bigg[T_\vp L\vp_{xx}+T_{D\vp}D^{-1}\vp_{xx}-\frac12 T_{D^2\vp}D^{-2}\vp_{xx}\\
&\qquad+T_{\vp_{xx}}L\vp + 2T_{\vp_x} L\vp_x+2 T_{D\vp_x}D^{-1}\vp_x\bigg] -2T_{A}\vp
+T_\vp T_\vp L\vp_{xx}+ 2T_{\vp}T_{\vp_{xx}} L\vp\\
&\qquad +2T_{D\vp}T_\vp D^{-1}\vp_{xx}-[T_{D^2\vp}T_\vp+T_{D\vp}T_{D\vp}]D^{-2}\vp_{xx}+4T_\vp T_{\vp_x} L\vp_x\\
&\qquad+2T_{\vp_x} T_{\vp_x} L\vp+ 4(T_{D\vp}T_{\vp_x}+T_\vp T_{D\vp_x})D^{-1}\vp_x+ \mathfrak{R} -2R(A,\vp),
\end{aligned}
\]
where
\[
\begin{aligned}
\mathfrak{R} & = -2 \vp (\Rc_1 + \Rc_2) + \Rc_3 + 2 \Rc_4 + \Rc_5,\\
A & =T_\vp L\vp_{xx}+T_{D\vp}D^{-1}\vp_{xx}-\frac12 T_{D^2\vp}D^{-2}\vp_{xx}
+T_{\vp_{xx}}L\vp + 2T_{\vp_x} L\vp_x+2 T_{D\vp_x}D^{-1}\vp_x.
\end{aligned}
\]
Simplifying the above equation, we find that the higher order terms involving $L\vp_{xx}$ and $L\vp_x$ vanish, and
\[
\begin{aligned}
&\varphi^2 L \varphi_{xx}
- \varphi L (\varphi^2)_{xx} + \frac 13 L (\varphi^3)_{xx}\\
&\qquad=  2 T_\vp T_{L\vp_{xx}} \vp-2T_\vp \left[\frac12 T_{D^2\vp}\vp+T_{\vp_{xx}}L\vp +2 T_{\vp_{xx}}\vp\right] -2T_{A}\vp
+ 2T_{\vp}T_{\vp_{xx}} L\vp\\
&\qquad\qquad -\left[T_{\vp_{xx}}T_\vp+T_{\vp_x}T_{\vp_x}\right]\vp+2T_{\vp_x} T_{\vp_x} L\vp+ 4(T_{\vp_x}T_{\vp_x}+T_\vp T_{\vp_{xx}})\vp\\
&\qquad\qquad+ \mathfrak{R} -2R(A,\vp) + 2[T_{D\vp}, T_\vp] D^{-1}\vp_{xx}\\
&\qquad= 2T_{\vp_x} T_{\vp_x} L\vp+  2 T_\vp T_{L\vp_{xx}} \vp+T_\vp  T_{\vp_{xx}}\vp -2T_{A}\vp -T_{\vp_{xx}}T_\vp\vp+3T_{\vp_x}T_{\vp_x}\vp\\
&\qquad\qquad+\mathfrak{R} -2R(A,\vp) + 2[T_{D\vp}, T_\vp] D^{-1}\vp_{xx}\\
&\qquad= 2T_{\vp_x}^2 L\vp+ T_{B} \vp + 2[T_{D\vp}, T_\vp] D^{-1}\vp_{xx}+\tilde \Rc,
\end{aligned}
\]
where $B$ is given by \eqref{defB}, and
\begin{align*}
\tilde \Rc & = \mathfrak{R} -2R(A,\vp) +(\tilde B- T_{B})\vp,\\
 \tilde B &= 2 T_\vp T_{L\vp_{xx}} +T_\vp T_{\vp_{xx}} -2T_{A} -T_{\vp_{xx}}T_\vp+3T_{\vp_x}T_{\vp_x}.
\end{align*}
By a Kato-Ponce type commutator estimate (see e.g., \cite{kato-ponce}), we have
\[
\|[T_{D\vp}, T_\vp] D^{-1}\vp_{xx}\|_{\dot{H}^{s + 1}} \leq C \|\vp\|_{W^{2,\infty}}^2 \|\vp\|_{\dot{H}^{s + 1}}.
\]
In addition, using the estimates of the remainders $\Rc_i$,  Sobolev embedding, and the estimate
\[
\|(\tilde B- T_{B})\vp\|_{\dot{H}^{s + 1}} \leq C (\|\vp\|_{W^{3,\infty}}^2) \|\vp\|_{\dot{H}^{s}},
\]
we get that
\[
\|\tilde \Rc\|_{\dot{H}^{s + 1}} \leq C \|\vp\|_{\dot{H}^{s}}^3.
\]
It follows that \eqref{sqgeq} can be written as
\[
\vp_t+\frac12\px \left\{2T_{\vp_x}^2L\vp+ T_{B} \vp+2[T_{\vp_x}, T_\vp]\vp_x +\tilde \Rc\right\}=2L\vp_x.
\]
or
\begin{equation}
\vp_t+\px \left\{\frac12T_{B} \vp+[T_{\vp_x}, T_\vp]\vp_x \right\}+\Rc_6=[(2-T_{\vp_x}^2)L\vp]_x,
\label{vpeq}
\end{equation}
where $\|\Rc_6\|_{\dot{H}^s} \leq \P( \|\vp\|_{\dot{H}^s})$ for $s > 7 / 2$.

Using Lemma \ref{lem-luv} to expand the term $L (2 - T_{\vp_x}^2) \vp$, we have the commutator estimate
\[
\|[(2-T_{\vp_x}^2), L]\vp\|_{H^{s+1}} \leq \P(\|\vp\|_{\dot{H}^s}).
\]
Hence, we can rewrite \eqref{vpeq} as \eqref{SEq},
with $L[(2-T_{\vp_x}^2)\vp]_x$ as the highest order term,
\[
\px \left\{ \frac12T_{B} \vp+[T_{\vp_x}, T_\vp]\vp_x \right\}
\]
as the first order term, and $\Rc_7$ as the zeroth order term, which satisfies \eqref{r7est} and does not lose derivatives.
\end{proof}

\section{Energy Estimate}\label{sec-apriori}

In this section, we prove an \emph{a priori} estimate for the initial value problem \eqref{sqgivp},
which is stated in Proposition \ref{apriori} below.

We first recall the following definition for fractional powers of operators. If $T \colon \H \to \H$
is a self-adjoint linear operator on a Hilbert space $\H$
and $f \in C_c^\infty(\R)$ is a function,
then $f(T)$ may be defined by the Helffer-Sj\"ostrand formula \cite{Dav, Hel} as
\begin{align}\label{spec-def}
\begin{split}
f(T) &= - \frac{1}{\pi} \lim_{\epsilon \to 0^+} \int_{|\Im z| > \epsilon} \partial_{\bar z} \tilde f(z) (z - T)^{-1} \diff{\alpha} \diff{\beta},
\\
\tilde f(z) &= \left(f(\alpha) + i \beta f'(\alpha) + \frac 12 (i\beta)^2 f''(\alpha)\right) \chi_0(\beta),
\end{split}
\end{align}
where $z = \alpha + i\beta$, $\partial_{\bar z} = \frac 12 (\partial_\alpha + i \partial_\beta)$, and
the cutoff-function $\chi_0\in C_c^\infty(\R)$ is equal to $1$ in a neighborhood of $0$.
The function $\tilde f$ is an ``almost analytic'' extension of $f$ since
\begin{align}\label{almost-ana}
\begin{split}
\partial_{\bar z} \tilde f(z)&
= O(|\Im z|^2)
\qquad\text{as $\Im z \to 0$ with $\Re z$ fixed}.
\end{split}
\end{align}
Furthermore, if $U\subset \R$ is an open set that contains the spectrum $\sigma(T)\subset \R$ of $T$ and
$g \in C^\infty(U)$, then, by the resolution of identity form of the spectral theorem \cite{ReSi},
we see that $g(T) = f(T)$, where $f =  g \chi_1$ and $\chi_1 \in C_c^\infty(U)$ with
$\chi_1 = 1$ on $\sigma(T)$.

In particular, if $\|T_{\vp_x}^2\|_{L^2\to L^2} < 2$,
then $(2 - T_{\vp_x}^2)$ is a positive, self-adjoint operator on $L^2$, and
$(2-T_{\vp_x}^2)^s$ is well-defined for $s\in \R$ by \eqref{spec-def} as $f(2 - T_{\vp_x}^2)$,
where
\begin{equation}
f(\alpha) = |\alpha|^{s} \chi_1(\alpha)
\label{deffalpha}
\end{equation}
for $\chi_1\in C_c^\infty(0,2)$ such that $\chi_1 = 1$ on $\sigma(2 - T_{\vp_x}^2)$.
We can therefore define a weighted $s$-order energy  by
\begin{equation}
E^{(s)}(t)=\int_{\T} |D|^{s} \vp(x,t)\cdot\left(2-T_{\vp_x(x,t)}^2\right)^{2s+1} |D|^{s} \vp(x,t) \diff{x}.
\label{weighted_energy1}
\end{equation}

In order to prove Proposition \ref{apriori}, we need the following lemma.
\begin{lemma}\label{lem-dfT} Suppose that $s > 7 / 2$.
If $\vp$ is a smooth solution of \eqref{SEq}
and $\psi \in L^2$, then
\[
\begin{aligned}
\partial_t (2 - T_{\vp_x}^2)^s \psi & = (2 - T_{\vp_x}^2)^s \psi_t - s (2 - T_{\vp_x}^2)^{s - 1} (T_{\vp_x} T_{\vp_{xt}} + T_{\vp_{xt}} T_{\vp_x}) \psi + \Rc_8(\psi),
\end{aligned}
\]
where the remainder term satisfies
\[
\|\Rc_8(\psi)\|_{\dot H^1} \leq \P\left(\|\vp\|_{\dot H^s}\right) \|\psi\|_{L^2}
\]
for a nondecreasing polynomial $\P$.
\end{lemma}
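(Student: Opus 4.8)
The plan is to differentiate the Helffer--Sj\"ostrand formula \eqref{spec-def} with respect to $t$. Write $T(t) = 2 - T_{\vp_x(\cdot,t)}^2$, a family of positive self-adjoint operators on $L^2$ that is smooth in $t$ because $\vp$ solves \eqref{SEq} and $s>7/2$ (so $\vp_x, \vp_{xt}$ are bounded). Fix a cutoff $\chi_1 \in C_c^\infty(0,2)$ equal to $1$ on the spectrum of $T(t)$ for $t$ in a small interval, and let $f(\alpha) = |\alpha|^s \chi_1(\alpha)$ as in \eqref{deffalpha}, so $T(t)^s = f(T(t))$. Since $\tilde f$ does not depend on $t$, differentiating under the integral sign in \eqref{spec-def} gives, using $\partial_t (z - T(t))^{-1} = (z - T(t))^{-1} \dot T(t) (z - T(t))^{-1}$,
\[
\partial_t \big(T(t)^s\big) = -\frac{1}{\pi} \lim_{\epsilon\to 0^+} \int_{|\Im z| > \epsilon} \partial_{\bar z}\tilde f(z)\, (z-T)^{-1}\dot T (z-T)^{-1} \diff{\alpha}\diff{\beta},
\]
where $\dot T = \partial_t T_{\vp_x}^2 \cdot(-1) = -(T_{\vp_x} T_{\vp_{xt}} + T_{\vp_{xt}} T_{\vp_x})$ by the product rule for Weyl para-products, up to an error controlled by the estimates in Section~\ref{sec-weyl}.

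The key step is then to reduce the double-resolvent expression to a single-resolvent one modulo a manageable remainder, in order to recognize the leading term $s\,T^{s-1}\dot T$. I would insert the algebraic identity
\[
(z-T)^{-1}\dot T (z-T)^{-1} = \frac{\diff}{\diff z}\Big[ -(z-T)^{-1}\Big]\dot T \quad\text{``}=\text{''}\quad \partial_z\!\big[(z-T)^{-1}\big]\dot T,
\]
more precisely use $(z-T)^{-1}\dot T(z-T)^{-1} = (z-T)^{-2}\dot T + (z-T)^{-1}[\dot T,(z-T)^{-1}]$ and iterate once more so that the commutator term $[[\dot T, (z-T)^{-1}], (z-T)^{-1}]$ carries three resolvent factors. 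Integrating the $(z-T)^{-2}\dot T$ term by parts in $z$ against $\partial_{\bar z}\tilde f$ produces $f'(T)\dot T = sT^{s-1}\chi_1(T)\dot T + (\text{terms with }\chi_1')$; since $\chi_1 = 1$ on $\sigma(T)$, the $\chi_1'$ terms vanish and $\chi_1(T) = \mathrm{id}$ on the relevant spectral subspace, giving the stated leading term $-s(2-T_{\vp_x}^2)^{s-1}(T_{\vp_x}T_{\vp_{xt}} + T_{\vp_{xt}}T_{\vp_x})\psi$.

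It remains to collect everything else into $\Rc_8(\psi)$ and verify $\|\Rc_8(\psi)\|_{\dot H^1} \le \P(\|\vp\|_{\dot H^s})\|\psi\|_{L^2}$. The remainder consists of: (i) the double/triple-commutator integrals above, for which one uses the almost-analyticity \eqref{almost-ana}, $\partial_{\bar z}\tilde f(z) = O(|\Im z|^2)$, together with the resolvent bound $\|(z-T)^{-1}\|_{L^2\to L^2}\le 1/|\Im z|$ and the fact that each commutator with $T_{\vp_x}^2$ gains a derivative and costs $\P(\|\vp\|_{\dot H^s})$ by the para-product calculus, so the $|\Im z|^{-2}$ or $|\Im z|^{-3}$ singularities are absorbed and the integral converges; (ii) the discrepancy between $-\partial_t T_{\vp_x}^2$ and the symmetrized expression $-(T_{\vp_x}T_{\vp_{xt}} + T_{\vp_{xt}}T_{\vp_x})$, which is of lower order; and (iii) to land in $\dot H^1$ rather than merely $L^2$, I would commute one power of $|D|$ through, at the cost of one more para-product commutator gaining a derivative. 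The main obstacle I anticipate is the bookkeeping of the $\dot H^1$ (rather than $L^2$) gain: one must check that each resolvent-sandwiched commutator really does produce a net smoothing of one derivative uniformly in $z$, so that after integrating against $\partial_{\bar z}\tilde f$ the full remainder lies in $\dot H^1$ with the claimed polynomial bound in $\|\vp\|_{\dot H^s}$; this is where the estimates of Lemma~\ref{lem-DsT} and the commutator bounds from Section~\ref{sec-weyl} must be applied with some care.
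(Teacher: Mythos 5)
Your proposal follows essentially the same route as the paper: differentiate the Helffer--Sj\"ostrand formula, use $\partial_t(z-2+T_{\vp_x}^2)^{-1}=-(z-2+T_{\vp_x}^2)^{-1}(T_{\vp_x}T_{\vp_{xt}}+T_{\vp_{xt}}T_{\vp_x})(z-2+T_{\vp_x}^2)^{-1}$, split off $(z-2+T_{\vp_x}^2)^{-2}$ as a $\partial_z$ (hence $\partial_\alpha$) derivative and integrate by parts to produce $-s(2-T_{\vp_x}^2)^{s-1}(T_{\vp_x}T_{\vp_{xt}}+T_{\vp_{xt}}T_{\vp_x})\psi$, and bound the single resolvent--commutator remainder by a Kato--Ponce/para-product estimate of size $\P(\|\vp\|_{\dot H^s})|\Im z|^{-2}$ absorbed by $\partial_{\bar z}\tilde f=O(|\Im z|^2)$. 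Two minor remarks: the symmetrized expression for $\partial_t T_{\vp_x}^2$ is exact (no error term is needed), and your proposed second iteration of the commutator is unnecessary and, with three resolvent factors against only $O(|\Im z|^2)$ decay, would be borderline unless you take a higher-order almost-analytic extension; the one-step split the paper uses already suffices.
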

\begin{proof}
For $z = \alpha + i \beta  \in \C \setminus \R$, we have
\begin{align*}
&\left[\pt (z - 2 + T_{\vp_x}^2)^{-1}\right] (z - 2 + T_{\vp_x}^2) + (z - 2 + T_{\vp_x}^2)^{-1} (T_{\vp_x} T_{\vp_{xt}} + T_{\vp_{xt}} + T_{\vp_x})
\\
 &\qquad\qquad = \pt \left[(z - 2 + T_{\vp_x}^2)^{-1} (z - 2 + T_{\vp_x}^2)\right] = \pt\,{\rm Id}
 = 0.
\end{align*}
It follows that
\[
\begin{aligned}
\pt (z - 2 + T_{\vp_x}^2)^{-1} & = - (z - 2 + T_{\vp_x}^2)^{-1} (T_{\vp_x} T_{\vp_{xt}} + T_{\vp_{xt}} T_{\vp_x}) (z - 2 + T_{\vp_x}^2)^{-1}\\
& = - (z - 2 + T_{\vp_x}^2)^{-2} (T_{\vp_x} T_{\vp_{xt}} + T_{\vp_{xt}} + T_{\vp_x})\\
& \qquad + (z - 2 + T_{\vp_x}^2)^{-1} \left[(z - 2 + T_{\vp_x}^2)^{-1}, T_{\vp_x} T_{\vp_{xt}} + T_{\vp_{xt}} + T_{\vp_x}\right]\\
& = \partial_z (z - 2 + T_{\vp_x}^2)^{-1} (T_{\vp_x} T_{\vp_{xt}} + T_{\vp_{xt}} + T_{\vp_x})\\
& \qquad + (z - 2 + T_{\vp_x}^2)^{-1} \left[(z - 2 + T_{\vp_x}^2)^{-1}, T_{\vp_x} T_{\vp_{xt}} + T_{\vp_{xt}} + T_{\vp_x}\right].
\end{aligned}
\]
Using \eqref{spec-def}, where $f$ is defined by \eqref{deffalpha}, and the previous equation, we get that
\[
\begin{aligned}
\pt (2 - T_{\vp_x}^2)^s \psi & = \pt f(2 - T_{\vp_x}^2)\psi\\
& = (2 - T_{\vp_x}^2)^s \psi_t - \frac{1}{\pi} \left[\lim_{\epsilon \to 0^+} \int_{|\Im z| > \epsilon} \partial_{\bar z} \tilde f(z) \pt (z - 2 + T_{\vp_x}^2)^{-1} \diff{\alpha} \diff{\beta}\right] \psi\\
& = (2 - T_{\vp_x}^2)^s \psi_t + T_1 \psi + \Rc_8,
\end{aligned}
\]
where
\begin{align*}
T_1 \psi &= - \frac{1}{\pi} \left[\lim_{\epsilon \to 0^+} \int_{|\Im z| > \epsilon} \partial_{\bar z} \tilde f(z) \partial_z (z - 2 + T_{\vp_x}^2)^{-1} \diff{\alpha} \diff{\beta}\right] (T_{\vp_x} T_{\vp_{xt}} + T_{\vp_{xt}} T_{\vp_x}) \psi,
\\
\Rc_8 &=  - \frac{1}{\pi} \bigg[\lim_{\epsilon \to 0^+} \int_{|\Im z| > \epsilon} \partial_{\bar z} \tilde f(z) (z - 2 + T_{\vp_x}^2)^{-1} \big[(z - 2 + T_{\vp_x}^2)^{-1}, T_{\vp_x} T_{\vp_{xt}} + T_{\vp_{xt}} + T_{\vp_x}\big] \diff{\alpha} \diff{\beta}\bigg] \psi.
\end{align*}
Since $2 - T_{\vp_x}^2$ is self-adjoint, we have $\partial_{\bar z} (z - 2 + T_{\vp_x}^2)^{-1} = 0$ for
$z \in \C \setminus \R$, so
\[
\partial_z (z - 2 + T_{\vp_x}^2)^{-1} =  \partial_\alpha (z - 2 + T_{\vp_x}^2)^{-1}.
\]
We can then integrate by parts with respect to $\alpha$ in $T_1 \psi$ to get
\[
\begin{aligned}
T_1 \psi
& = \frac{1}{\pi} \left[\lim_{\epsilon \to 0^+} \int_{|\Im z| > \epsilon} \partial_{\bar z} \widetilde {f'}(z) (z - 2 + T_{\vp_x}^2)^{-1} \diff{\alpha} \diff{\beta}\right] (T_{\vp_x} T_{\vp_{xt}} + T_{\vp_{xt}} T_{\vp_x}) \psi\\
& = - s (2 - T_{\vp_x}^2)^{s- 1} (T_{\vp_x} T_{\vp_{xt}} + T_{\vp_{xt}} T_{\vp_x}) \psi.
\end{aligned}
\]

Finally, using a Kato-Ponce type estimate for commutators and \eqref{SEq} to estimate $\vp_{xt}$, we have
\[
\bigg\|(z - 2 + T_{\vp_x}^2)^{-1} \big[(z - 2 + T_{\vp_x}^2)^{-1}, T_{\vp_x} T_{\vp_{xt}} + T_{\vp_{xt}} T_{\vp_x}\big]\bigg\|_{L^2 \to \dot H^1} \leq \P\left(\|\vp\|_{\dot H^s}\right) |\Im z|^{-2}.
\]
It follows that
\[
\|\Rc_8\|_{\dot H^1} \leq \P\left(\|\vp\|_{\dot H^s}\right)  \|\psi\|_{L^2} \left[\lim_{\epsilon \to 0^+} \int_{|\Im z| > \epsilon} |\partial_{\bar z} \tilde f(z)| |\Im z|^{-2} \diff{\alpha} \diff{\beta} \right],
\]
where the integral converges by \eqref{almost-ana}.
\end{proof}

We now prove the following \emph{a priori} estimate.

\begin{proposition}\label{apriori}
Suppose that $s>7/2$ and $\vp$ is a smooth solution of \eqref{sqgivp} with $\vp_0 \in \dot{H}^s$. If $\|T_{\vp_{0x}}^2\|_{L^2 \to L^2} \leq C$ for some constant $0 < C < 2$, then
there exists a time $\Time > 0$ and  a constant $M > 0$, depending on $\vp_0$, such that
\[
\sup_{t \in [0, \Time]}E^{(s)}(t) \leq M,
\]
where $E^{(s)}(t)$ is defined in \eqref{weighted_energy1}.
\end{proposition}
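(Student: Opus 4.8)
The plan is to differentiate the weighted energy $E^{(s)}(t)$ in \eqref{weighted_energy1} along a smooth solution, prove a differential inequality $\frac{d}{dt}E^{(s)}(t)\le\P(E^{(s)}(t))$ valid as long as $(2-T_{\vp_x}^2)$ is positive definite, and close the estimate by a continuity argument. Throughout I write $Q=2-T_{\vp_x}^2$, $v=|D|^s\vp$, and $M=[\px,T_{\vp_x}^2]=T_{\vp_{xx}}T_{\vp_x}+T_{\vp_x}T_{\vp_{xx}}$, I let $\langle\cdot,\cdot\rangle$ be the real $L^2(\T)$ inner product, and I call a term \emph{controllable} if, after the relevant pairing, it is bounded by $\P(\|\vp\|_{\dot H^s})$. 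Since $\|T_{\vp_{0x}}^2\|_{L^2\to L^2}\le C<2$ and $\partial_tT_{\vp_x}^2=T_{\vp_{xt}}T_{\vp_x}+T_{\vp_x}T_{\vp_{xt}}$ with $\|\vp_{xt}\|_{L^\infty}\le\P(\|\vp\|_{\dot H^s})$ — using \eqref{SEq} to express $\vp_{xt}$ and the embedding $\dot H^{s-2-\epsilon}\hookrightarrow L^\infty$, valid for $s>7/2$ — the operator $Q(t)$ stays positive definite with $\|T_{\vp_x(t)}^2\|_{L^2\to L^2}\le(2+C)/2$ on a time interval whose length is controlled by $\|\vp_0\|_{\dot H^s}$ and $C$; on such an interval $Q^{2s+1}$ is defined by the functional calculus of Section~\ref{sec-apriori} and $E^{(s)}(t)$ is equivalent to $\|\vp(t)\|_{\dot H^s}^2$ with constants depending only on $C$. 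Differentiating, $\frac{d}{dt}E^{(s)}=2\langle Q^{2s+1}v,v_t\rangle+\langle v,(\partial_tQ^{2s+1})v\rangle$, and the second term is controllable because Lemma~\ref{lem-dfT} (with $2s+1$ in place of $s$) shows $\partial_tQ^{2s+1}$ is a bounded operator on $L^2$ with norm $\le\P(\|\vp\|_{\dot H^s})$.

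For the first term I substitute $v_t=|D|^s\vp_t$ using \eqref{SEq} from Lemma~\ref{bony-sqg}, namely $\vp_t=\px L(Q\vp)-\px F-\Rc_7$ with $F=\tfrac12T_B\vp+[T_{\vp_x},T_\vp]\vp_x$. The contribution of $\Rc_7$ is bounded by $\|Q^{2s+1}v\|_{L^2}\|\Rc_7\|_{\dot H^s}\le\P(\|\vp\|_{\dot H^s})$ by \eqref{r7est}. In the contribution of $\px F$ the commutator piece $\px\{[T_{\vp_x},T_\vp]\vp_x\}$ gains two derivatives, so it is directly controllable; commuting $|D|^s$ through $T_B$ and transferring the derivative by Lemma~\ref{lem-DsT} reduces the remaining part, modulo controllable remainders, to $-\langle Q^{2s+1}v,T_B\px v\rangle$. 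Since $T_B$ and $Q^{2s+1}$ are self-adjoint, $[T_B,Q^{2s+1}]$ gains a derivative, and $[\px,T_BQ^{2s+1}]$, $[\px,Q^{2s+1}T_B]$ are bounded of order zero (here one uses $B,B_x\in L^\infty$, which holds for $s>7/2$), an integration by parts in $x$ turns $2\langle Q^{2s+1}v,T_B\px v\rangle$ into $-\langle\px(T_BQ^{2s+1}+Q^{2s+1}T_B)v,v\rangle$ up to controllable remainders, and this is controllable.

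The crux is the remaining term $2\langle Q^{2s+1}v,\px L|D|^s(Q\vp)\rangle$, where the logarithmic loss of derivatives occurs and is cancelled by the exponent $2s+1$ in the weight. By Lemma~\ref{lem-DsT},
\[
|D|^s(Q\vp)=Qv+sM|D|^{s-2}\vp_x+(\text{smoother}),
\]
with the remainder in $\dot H^2$ (the paraproduct coefficients are controlled by the derivative-transfer bounds of Section~\ref{sec-weyl}, using $s>7/2$). Since $\px|D|^{s-2}\vp_x=-v$ and $\px L=L\px$, pushing $\px$ inward gives
\[
\px L|D|^s(Q\vp)=LQ\px v-(1+s)LMv+(\text{controllable}),
\]
the ``$1$'' coming from $\px LQ=LQ\px+L[\px,Q]=LQ\px-LM$ and the ``$s$'' from the commutator of $|D|^s$ with $Q$. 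Using that $\px L$ is skew-adjoint, that $Q$ and $M$ are self-adjoint, that $[L,Q^j]$ is of order $-1$ by Lemma~\ref{lem-luv}, that $[\px,Q^{j}]=-jQ^{j-1}M+(\text{order}\le-1)$ by the functional calculus, and that $[M,Q^j]$ gains a derivative, an integration by parts in $x$ reduces
\[
\langle Q^{2s+1}v,LQ\px v\rangle=-\tfrac12\langle[\px,QLQ^{2s+1}]v,v\rangle+(\text{controllable})=\frac{(2s+1)+1}{2}\,\langle Q^{2s+1}v,LMv\rangle+(\text{controllable}),
\]
so that
\[
2\langle Q^{2s+1}v,\px L|D|^s(Q\vp)\rangle=2\Bigl(\frac{(2s+1)+1}{2}-(1+s)\Bigr)\langle Q^{2s+1}v,LMv\rangle+(\text{controllable})=(\text{controllable}),
\]
since $\tfrac{(2s+1)+1}{2}=1+s$. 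Thus all the individually logarithmically divergent contributions, which are all proportional to $\langle Q^{2s+1}v,LMv\rangle$, cancel exactly. This cancellation, forced precisely by the exponent $2s+1$, is the heart of the proof and the step I expect to require the most care: one must keep careful track of which commutators gain a genuine derivative (e.g.\ $[L,T_u]$, by Lemma~\ref{lem-luv}) and which are only bounded of order ``$0+\log$'' (these are precisely the ones proportional to $\langle Q^{2s+1}v,LMv\rangle$, harmless only after summation), and every remainder estimate uses $s>7/2$.

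Collecting all the estimates yields $\frac{d}{dt}E^{(s)}(t)\le\P(\|\vp(t)\|_{\dot H^s})\le\P(E^{(s)}(t))$ (possibly enlarging $\P$), using the equivalence $E^{(s)}\sim\|\vp\|_{\dot H^s}^2$ on the interval where the weight is positive definite. A standard comparison with the ODE $g'=\P(g)$, $g(0)=E^{(s)}(0)$, gives $\sup_{[0,\Time]}E^{(s)}(t)\le M$ for some $\Time>0$ and $M>0$ depending only on $E^{(s)}(0)$, hence on $\|\vp_0\|_{\dot H^s}$ and $C$; shrinking $\Time$ if necessary so that the resulting bound on $\|\vp\|_{\dot H^s}$ keeps $\|T_{\vp_x(t)}^2\|_{L^2\to L^2}\le(2+C)/2<2$ on $[0,\Time]$ closes the continuity argument and proves the proposition.
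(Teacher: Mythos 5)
Your proposal is correct and takes essentially the same route as the paper: the same weighted energy with weight $(2-T_{\vp_x}^2)^{2s+1}$, the same use of Lemma~\ref{bony-sqg}, Lemma~\ref{lem-DsT} and Lemma~\ref{lem-dfT}, commutator estimates plus self-adjointness for the transport terms, and a Gr\"onwall/continuity argument to keep $(2-T_{\vp_x}^2)$ positive definite. The only difference is bookkeeping of the key cancellation: the paper applies $(2-T_{\vp_x}^2)^{s}$ to the equation so that the dangerous contribution becomes $\int_\T (2-T_{\vp_x}^2)^{s+1}|D|^s\vp\cdot \px L (2-T_{\vp_x}^2)^{s+1}|D|^s\vp\diff{x}=0$ by skew-symmetry of $\px L$, whereas you pair directly with the weight and check that the coefficient of the logarithmically divergent pairing is $(s+1)-(s+1)=0$ --- the same cancellation, forced by the same exponent $2s+1$, computed explicitly.
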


\begin{proof}
We apply the operator $|D|^s$ to equation \eqref{SEq} to get
\beq\label{Dseqn}
|D|^s \vp_t +  |D|^s \px \left( \frac12T_{B} \vp+[T_{\vp_x}, T_\vp]\vp_x \right) + |D|^s\Rc_7 = \px L|D|^s \left[(2-T_{\vp_x}^2) \vp\right].
\eeq
Using Lemma \ref{lem-DsT} twice, we find that
\[
\begin{aligned}
|D|^s\left[(2-T_{\vp_x}^2) \vp\right]&=2|D|^s\vp-|D|^s(T_{\vp_x}^2\vp)\\
&=2|D|^s\vp-T_{\vp_x}^2 |D|^s\vp+sT_{\vp_x}T_{\vp_{xx}}|D|^{s-2}\vp_x+sT_{\vp_{xx}}T_{\vp_{x}}|D|^{s-2}\vp_x+\Rc_{10},
\end{aligned}
\]
where $\|\px \Rc_{10}\|_{L^2} \leq C \|\vp\|_{W^{3,\infty}}^2\|\vp\|_{\dot{H}^s}$.

Thus, we write can the right-hand side of \eqref{Dseqn} as
\[
\begin{aligned}
\px L |D|^s & \left[(2 - T_{\vp_x}^2) \vp\right]\\
& = \px L \left[(2-T_{\vp_x}^2) |D|^s\vp+sT_{\vp_x}T_{\vp_{xx}}|D|^{s-2}\vp_x+sT_{\vp_{xx}}T_{\vp_{x}}|D|^{s-2}\vp_x\right]+\Rc_{11}\\
& =L\big\{(2-T_{\vp_x}^2) |D|^s\vp_x-T_{\vp_x}T_{\vp_{xx}}|D|^s\vp-T_{\vp_{xx}}T_{\vp_{x}}|D|^s\vp\\
& \hspace{1.5in} -sT_{\vp_x}T_{\vp_{xx}}|D|^{s}\vp-sT_{\vp_{xx}}T_{\vp_{x}}|D|^{s}\vp\big\}+\Rc_{12}\\
&= L\left\{(2-T_{\vp_x}^2) |D|^s\vp_x-(s+1)(T_{\vp_x}T_{\vp_{xx}}+T_{\vp_{xx}}T_{\vp_{x}})|D|^s\vp\right\}+\Rc_{12}.
\end{aligned}\]
Applying $(2-T_{\vp_x}^2)^s$ to \eqref{Dseqn}, and
commuting $(2-T_{\vp_x}^2)^s$ with $L$ up to a remainder
term, as in the proof of  Lemma \ref{lem-luv}, we obtain that
\begin{align}
\begin{split}
&(2-T_{\vp_x}^2)^s|D|^s \vp_t  +  (2-T_{\vp_x}^2)^s |D|^s \px \left( \frac12T_{B} \vp+[T_{\vp_x}, T_\vp]\vp_x \right)  \\
&\qquad=  L\left\{(2-T_{\vp_x}^2)^{s+1}|D|^s\vp_x-(s+1)(2-T_{\vp_x})^s(T_{\vp_x}T_{\vp_{xx}}
+T_{\vp_{xx}}T_{\vp_{x}})|D|^s\vp\right\}+\Rc_{13}
\\
&\qquad=  \px L\left\{(2-T_{\vp_x}^2)^{s+1}|D|^s\vp\right\}+\Rc_{14},
\end{split}
\label{JDseqn}
\end{align}
where $\|\Rc_{14}\|_{L^2} \leq \P(\|\vp\|_{\dot{H}^s})$.

By Lemma \ref{lem-dfT}, with $\psi=|D|^s\vp$, the time derivative of $E^{(s)}(t)$ in \eqref{weighted_energy1} is
\bel{dtE}\begin{aligned}
\frac{\diff}{\diff{t}} E^{(s)}(t) & = - \int_{\T} (2s+1) |D|^s\vp\cdot (2-T_{\vp_x}^2)^{2s} (T_{\vp_x}T_{\vp_{xt}} + T_{\vp_{xt}}T_{\vp_x}) |D|^s\vp\diff{x}\\
& + 2\int_{\T}|D|^s\vp \cdot (2-T_{\vp_x}^2)^{2s+1} |D|^s\vp_t\diff{x} + \int_{\T}\Rc_8\left(|D|^s\vp\right) |D|^s\vp \diff{x}.
\end{aligned}\eeq

\noindent {\bf(1)}  Equation \eqref{SEq} implies that $\|\vp_{xt}\|_{L^\infty} \leq  \P(\|\vp\|_{\dot{H}^s})$, so the first term on the right-hand side of \eqref{dtE} can be estimated by
\[
\left| \int_{\T} (2s+1)|D|^s\vp \cdot (2-T_{\vp_x}^2)^{2s}(T_{\vp_x}T_{\vp_{xt}} + T_{\vp_{xt}}T_{\vp_x}) |D|^s\vp\diff{x} \right| \leq C \|\vp\|_{W^{1,\infty}}^3\|\vp_{xt}\|_{L^\infty} \|\vp\|_{\dot{H}^s}^2 \leq \P(\|\vp\|_{\dot{H}^s}).
\]
In addition, from Lemma~\ref{lem-dfT}, the third term  on the right-hand side of \eqref{dtE} can be estimated by
\[
 \int_{\T}\Rc_8\left(|D|^s\vp\right) |D|^s\vp \diff{x} \le \P\left(\|\vp\|_{\dot{H}^s}\right).
\]
\noindent {\bf(2)} To estimate the second term  on the right-hand side \eqref{dtE}, we multiply \eqref{JDseqn} by $(2-T_{\vp_x}^2)^{s+1}|D|^s\vp$, integrate the result with respect to $x$, and use the self-adjointness of
$(2-T_{\vp_x}^2)^{s+1}$, which gives
\[
\int_{\T} |D|^s\vp \cdot (2-T_{\vp_x}^2)^{2s+1} |D|^s\vp_t \diff{x}
= \Rm{1} + \Rm{2} + \Rm{3},
\]
where
\begin{align*}
\Rm{1} &= - \int_{\T} |D|^s\vp\cdot (2-T_{\vp_x}^2)^{2s+1} |D|^s\partial_x\left( \frac12T_{B} \vp+[T_{\vp_x}, T_\vp]\vp_x \right) \diff{x},
\\
\Rm{2} &= \int_\T (2-T_{\vp_x}^2)^{s+1}|D|^s\vp\cdot \px L (2-T_{\vp_x}^2)^{s+1}|D|^s\vp \diff{x},
\\
\Rm{3}&=\int _\T(2-T_{\vp_x}^2)^{s+1}|D|^s\vp\cdot \Rc_{14} \diff{x}.
\end{align*}
We have $\Rm{2}=0$, since $\partial_x L$ is skew-symmetric, and
\[
\Rm{3} \leq \P(\|\vp\|_{\dot{H}^s}),
\]
since $\|\Rc_{14}\|_{L^2} \leq \P(\|\vp\|_{\dot{H}^s})$ and $(2 - T_{\vp_x}^2)^{s+1}$ is bounded on $L^2$.

\noindent {\bf Term} $\Rm{1}$ {\bf estimate.} We write  $\Rm{1} = -\Rm{1}_a + \Rm{1}_b$, where
\begin{align*}
\Rm{1}_a & =\int_{\T}|D|^s\vp\cdot (2-T_{\vp_x}^2)^{2s+1} \partial_x \left( \frac12T_{B} |D|^s\vp+[T_{\vp_x}, T_\vp]|D|^s\vp_x \right)\diff{x},\\
 \Rm{1}_b&= \int_{\T} |D|^s\vp\cdot (2-T_{\vp_x}^2)^{2s+1} \partial_x \left( \frac12[T_{B}, |D|^s]\vp+\left[[T_{\vp_x}, T_\vp],|D|^s\right]\vp_x \right) \diff{x}.
\end{align*}
By a commutator estimate, the second integral satisfies
$|\Rm{1}_b| \leq \P(\|\vp\|_{\dot{H}^s})$.

To estimate the first integral, we write it as
\begin{align*}
\Rm{1}_a &= \Rm{1}_{a_1} - \frac12 \Rm{1}_{a_2} - \Rm{1}_{a_3},
\end{align*}
where
\begin{align*}
\Rm{1}_{a_1}& =\int_{\T} |D|^s\vp\cdot [(2-T_{\vp_x}^2)^{2s+1},\partial_x]  \left( \frac12T_{B} |D|^s\vp+[T_{\vp_x}, T_\vp]|D|^s\vp_x \right) \diff{x},
\\
\Rm{1}_{a_2}  &= \int_{\T} |D|^s\vp_x \cdot(2-T_{\vp_x}^2)^{2s+1} \left(T_{B} |D|^s\vp\right) \diff{x},
\\
\Rm{1}_{a_3} &= \int_{\T} |D|^s\vp_x\cdot (2-T_{\vp_x}^2)^{2s+1}  \left([T_{\vp_x}, T_\vp]|D|^s\vp_x\right)\diff{x}.
\end{align*}

\noindent {\bf Term} $\Rm{1}_{a_1}$ {\bf estimate.} A Kato-Ponce commutator estimate gives
\[
|\Rm{1}_{a_1}| \leq \P\left(\|\vp\|_{\dot{H}^s}\right).
\]

\noindent {\bf Term} $\Rm{1}_{a_2}$ {\bf estimate.} We have
\bel{eqBs}\begin{aligned}
\Rm{1}_{a_2}
&=\int_{\T} \left(T_{B} |D|^s\vp\right) \cdot (2-T_{\vp_x}^2)^{2s+1}|D|^s\vp_x \diff{x}\\
&=\int_{\T} \left(T_{B} |D|^s\vp\right) \cdot \left\{ \partial_x\left( (2-T_{\vp_x}^2)^{2s+1}|D|^s\vp\right)-\left[\partial_x,(2-T_{\vp_x}^2)^{2s+1}\right] |D|^s\vp \right\}\diff{x}\\
&=- \int_{\T} \partial_x \left(T_{B} |D|^s\vp\right) \cdot (2-T_{\vp_x}^2)^{2s+1}|D|^s\vp \diff{x} - \int_\T \left(T_{B} |D|^s\vp\right) \cdot \left[\partial_x,(2-T_{\vp_x}^2)^{2s+1}\right] |D|^s\vp \diff{x}\\
&=- \int_{\T} \left(T_B |D|^s\vp_x + \left[\partial_x,T_{B}\right] |D|^s\vp\right) \cdot(2-T_{\vp_x}^2)^{2s+1}|D|^s\vp \diff{x}\\
& \qquad - \int_\T T_{B} \left(|D|^s\vp\right) \cdot \left[\partial_x,(2-T_{\vp_x}^2)^{2s+1}\right] |D|^s\vp \diff{x}.
\end{aligned}\eeq
Using the commutator estimates
\[
\left\|\left[\partial_x,(2-T_{\vp_x}^2)^{2s+1}\right] |D|^s\vp\right\|_{L^2} \leq \P(\|\vp\|_{\dot{H}^s}),
\qquad
\left\|\left[\partial_x,T_{B}\right] |D|^s\vp\right\|_{L^2} \leq \P(\|\vp\|_{\dot{H}^s}),
\]
and the fact that $T_B$ is self-adjoint, we can rewrite \eqref{eqBs} as
\[
\begin{aligned}
\Rm{1}_{a_2}
& = - \Rm{1}_{a_2} - \int_\T  |D|^s\vp\cdot \partial_x\left[(2-T_{\vp_x}^2)^{2s+1},T_B\right]|D|^s\vp \diff{x}+\Rc_{15},
\end{aligned}\]
with $|\Rc_{15}| \leq \P(\|\vp\|_{H^s})$.
Using the commutator estimate
\[
\left\|\partial_x \left[(2-T_{\vp_x}^2)^{2s+1},T_B\right]|D|^s\vp \diff{x}\right\|_{L^2} \leq \P(\|\vp\|_{\dot{H}^s}),
\]
we conclude that $|\Rm{1}_{a_2}| \leq \P(\|\vp\|_{\dot{H}^s})$.

\noindent {\bf Term} $\Rm{1}_{a_3}$ {\bf estimate.} Using the self-adjointness of $T_{\vp_x}$ and $T_{\vp}$, we obtain that
\[\begin{aligned}
\Rm{1}_{a_3}
& =\int_{\T}(2-T_{\vp_x}^2)^{2s+1}|D|^s\vp_x\cdot [T_{\vp_x}, T_{\vp}]|D|^s\vp_x \diff{x}\\
& =-\int_{\T} [T_{\vp_x}, T_{\vp}](2-T_{\vp_x}^2)^{2s+1}|D|^s\vp_x\cdot|D|^s\vp_x
\diff{x}.
\end{aligned}\]
Since
\[
\left\|\left[[T_{\vp_x}, T_{\vp}],(2-T_{\vp_x}^2)^{2s+1}\right]|D|^s\vp_x \right\|_{L^2} \leq \P(\|\vp\|_{\dot{H}^s}),
\]
we have that $|\Rm{1}_{a_3}|  \leq \P(\|\vp\|_{\dot{H}^s})$.

Collecting the above estimates, we obtain that
\[
\frac{\diff}{\diff{t}} E^{(s)} \leq \P\left(\|\vp\|_{\dot{H}^s}\right).
\]
Finally, since $\|2 - T_{\vp_{0x}}^2\|_{L^2 \to L^2} \ge 2-C$ and  $\|\vp_{x}(t)\|_{L^\infty}$ is continuous in time, there exists $\Time>0$ and $m>0$, depending only on the initial data, such that
\[
\|2-T_{\vp_x}^2\|_{L^2 \to L^2} \geq m \qquad \text{for $t\le\Time$}.
\]
We therefore obtain that
\[
m^{2s+1}\||D|^s\vp\|_{L^2}^2\leq E^{(s)}\leq 2^{2s+1} \||D|^s\vp\|_{L^2}^2,
\]
which implies that
\[
\frac{\diff}{\diff{t}}E^{(s)} \leq \P(E^{(s)}).
\]
The result then follows by Gr\"onwall's inequality.
\end{proof}

\section{Well-posedness}\label{Wp}

In this section, we construct solutions of \eqref{sqgivp} by a Galerkin method. For $N\in \N$,
let
\begin{equation}
J_N : L^2(\T) \to L^2(\T),\qquad J_N f(x)=\sum\limits_{|\xi| \le N} \hat f(\xi) e^{i\xi x}
\label{defJN}
\end{equation}
denote the projection onto the first $N$ Fourier modes.
We define an approximate solution $\vp^N(x,t)$ as the solution of the ODEs obtained by projection of \eqref{bony-sqg},
\beq
\vp^N_t+\px J_N\left\{\frac12T_{B(\vp^N)} \vp^N+[T_{\vp^N_x}, T_{\vp^N}]\vp^N_x \right\}+J_N\Rc_7(\vp^N)=J_NL[(2-T_{\vp^N_x}^2)\vp^N]_x,
\label{Eqapp}
\eeq
with initial data $\vp^N(x,0)=J_N\vp_0(x)$.

Repeating the previous estimates, we obtain that
\[
\frac{\diff}{\diff{t}} E^{(s)}(\vp^N) \leq \P\left(E^{(s)}(\vp^N)\right).
\]
Thus, since $E^{(s)}(J_N\vp_0) \lesssim \|\vp_0\|_{\dot{H}^s}^2$, there exists $T>0$ independent of $N$ 
such that the solution of \eqref{Eqapp} exists for $t\in[0,T]$ and
\[
\|\vp^N(t)\|_{\dot{H}^s}\leq 
\P(\|\vp_0\|_{\dot{H}^s}),
\]
where $\P$ is an nondecreasing polynomial independent of $N$.
The sequence of approximate solutions $\{\vp^N\}$ is therefore bounded in $L^\infty(0,T; \dot{H}^s)$,
so a subsequence converges weak-$*$ to a limit
\[
\vp \in L^\infty(0,T; \dot{H}^s).
\]
Moreover, from \eqref{Eqapp}, we see that $\{\vp_t^N\}$ is bounded in $L^\infty(0, T; \dot{H}^{s-1-\delta})$ for $\delta > 0$. The Aubin-Lions Lemma (see e.g., \cite{Ama00})
implies that a further subsequence converges strongly to $\vp$ in $C([0,T];\dot{H}^{r})$ for any $r < s$.
Taking the limit of \eqref{Eqapp} as $N\to\infty$, we find that $\vp$ is a solution of \eqref{bony-sqg}.

Since $\vp \in   L^\infty(0,T; \dot{H}^s) \cap C([0,T];\dot{H}^{r})$, we see that $\vp\in C_w([0,T];\dot{H}^{s})$ is weakly continuous in $\dot{H}^s$. In addition, the Arzel\`a-Ascoli theorem implies that $E^{(s)}(\vp)$ is continuous in time, since
$E^{(s)}(\vp^N)$ is continuous for each $N\in\N$, and
\[
\frac{\diff}{\diff{t}} E^{(s)}(\vp^N)
\]
is bounded uniformly in $N$. It follows that $\|\vp\|_{\dot{H}^s}$ is continuous, so,
by weak continuity and norm continuity, $\vp\in C([0,T]; \dot{H}^s)$
is strongly continuous in $\dot{H}^s$.


To prove the Lipschitz continuity \eqref{stability} and uniqueness, we suppose that $\vp, \psi\in C([0,T];\dot{H}^{s})$ are solutions of \eqref{sqgivp}
with $s>7/2$.
Subtracting the evolution equations for $\vp$ and $\psi$,
we find that $\Phi=\vp-\psi$ satisfies
\begin{align}
\begin{split}
\partial_t\Phi+\partial_x\left\{\frac12 T_{B(\vp)}\Phi+[T_{\vp_x},T_\vp]\Phi_x\right\}+\partial_x\left\{\frac12 [T_{B(\vp)}-T_{B(\psi)}]\psi+\Big[[T_{\vp_x},T_\vp]-[T_{\psi_x},T_\psi]\Big]\psi_x\right\}\\
=L[(2-T_{\vp_x}^2)\Phi]_x-(LT_{\vp_x}^2-LT_{\psi_x}^2)\psi_x+\Rc_7(\vp)-\Rc_7(\psi).
\end{split}
\label{Phieq}
\end{align}
For $r\ge 0$, we define a weighted $\dot{H}^r$-norm by
\begin{equation*}
E_\vp^{(r)}(\Phi(t))=\int_{\T} |D|^{r} \Phi(x,t)\cdot\left(2-T_{\vp_x(x,t)}^2\right)^{2r+1} |D|^{r} \Phi(x,t) \diff{x}.
\end{equation*}
Applying $\partial_x^{r}$ to \eqref{Phieq}, with $0\le  r < s-1$, and carrying out energy estimates as before, we get
\[
\frac \diff{\diff t} E_{\vp}^{(r)}(\Phi)\leq \P( \|\vp\|_{H^s}, \|\psi\|_{H^s}) \big[ E_{\vp}^{(r)}(\Phi)+\|L\partial_x^{r+1}\psi\|_{L^\infty}\|\Phi\|_{H^r}^2\big],
\]
where we have used the estimates
\begin{align*}
\|\partial_x^r(LT_{\vp_x}^2-LT_{\psi_x}^2)\psi_x\|_{L^2}&\lesssim\left\{\begin{array}{ll} \|\Phi_x\|_{L^\infty}(\|\vp_x\|_{L^\infty}+\|\psi_x\|_{L^\infty})\|L\partial_x^{r+1}\psi\|_{L^2},& \text{ when } \frac32< r <s-1,\\
\|\Phi_x\|_{L^2}(\|\vp_x\|_{L^\infty}+\|\psi_x\|_{L^\infty})\|L\partial_x^{r+1}\psi\|_{L^\infty}, & \text{ when } 1\leq r\leq \frac32,\\
\|\Phi\|_{\dot H^r} (\|\vp_x\|_{L^\infty}+\|\psi_x\|_{L^\infty})\|L\partial_x^{2}\psi\|_{L^\infty},         & \text{ when } 0\leq r<1,
\end{array}\right.\\
&\lesssim \|\Phi\|_{H^r}(\|\vp_x\|_{L^\infty}+\|\psi_x\|_{L^\infty})\|\psi\|_{H^s},
\\
\|\partial_x^r [\Rc_7(\vp)-\Rc_7(\psi)]\|_{L^2}&\lesssim \P(\|\vp\|_{H^s}, \|\psi\|_{H^s})\|\Phi\|_{\dot H^r}.
\end{align*}
It follows that
\[
E_{\vp}^{(0)}(\Phi(t))+E_{\vp}^{(r)}(\Phi(t))\lesssim \left[E_{\vp}^{(0)}(\Phi(0))+E_{\vp}^{(r)}(\Phi(0))\right] \int_0^t \P( \|\vp\|_{H^s}, \|\psi\|_{H^s}) \diff t,
\]
and, since $E_{\vp}^{(0)}(\Phi)+E_{\vp}^{(r)}(\Phi)$ is equivalent to $\|\Phi\|^2_{H^r}$,  the solution map is Lipschitz continuous on $\dot{H}^r$. In particular, the solution is unique.

Finally, we prove that the solution map is continuous on $\dot{H}^s$  by a Bona-Smith argument  \cite{bonasmith}.
First, suppose that $\vp\in C([0,T]; \dot{H}^s)$,
$\psi \in  C([0,T]; \dot{H}^{s+1+\delta})$
are solutions, where $0<\delta \ll 1$, and let $\Phi = \vp-\psi$. In a similar way to before, we find that $E_\vp^{(s)}(\Phi)$ satisfies
\begin{align*}
\frac{\diff}{\diff t} E^{(s)}_{\vp}(\Phi) &\leq \P( \|\vp\|_{\dot{H}^s}, \|\psi\|_{\dot{H}^s})E_{\vp}^{(s)}(\Phi)
\\
&\qquad +\|2-T_{\vp_x}^2\|_{L^\infty}^{2s+1}\Big[\|\partial_x^s(LT_{\vp_x}^2-LT_{\psi_x}^2)\psi_x\|_{L^2}+\|\partial_x^s [\Rc_7(\vp)-\Rc_7(\psi)]\|_{L^2}\Big]\|\Phi\|_{\dot H^s}.
\end{align*}
Using the estimates
\begin{align*}
\|\partial_x^s(LT_{\vp_x}^2-LT_{\psi_x}^2)\psi_x\|_{L^2}&\lesssim \|L\psi\|_{\dot{H}^{s+1}}\|\Phi\|_{\dot{H}^{2}}(\|\vp\|_{\dot{H}^s}+ \|\psi\|_{\dot{H}^s}),
\\
\|\partial_x^s [\Rc_7(\vp)-\Rc_7(\psi)]\|_{L^2}&\lesssim \P(\|\vp\|_{\dot{H}^s}, \|\psi\|_{\dot{H}^s})\|\Phi\|_{\dot H^s},
\end{align*}
we get that
\begin{equation}\label{ContD}
E_{\vp}^{(s)}(\Phi(t))\lesssim  \P( \|\vp\|_{L_t^\infty \dot{H}^s}, \|\psi\|_{L_t^\infty \dot{H}^s}) \left[ E_{\vp}^{(s)}(\Phi(0))+\|\Phi\|_{L^\infty_t \dot{H}^s}\|\Phi\|_{L^\infty_t\dot{H}^2}\|L\psi\|_{L^\infty_t \dot{H}^{s+1}} \right].
\end{equation}
The higher-order derivative term $\|L\psi\|_{L^\infty_t \dot{H}^{s+1}}$, which obstructs Lipschitz continuity on $\dot{H}^s$, is compensated by the lower-order derivative factor $\|\Phi\|_{L^\infty_t\dot{H}^2}$, and we treat it by approximating $\dot{H}^s$-solutions by smooth solutions.

Given $f\in L^2$ and $N\in \N$, we let $f_N = J_Nf$ where the projection $J_N$ is defined in \eqref{defJN}.
If $f\in \dot{H}^s$, with $s\ge 2$, then $f_N \to f$ in $\dot{H}^s$ as $N\to \infty$, and
\begin{equation}
\|f_N - f\|_{\dot{H}^2} \lesssim \frac{1}{N^{s-2}} \|f\|_{\dot{H}^s},\qquad
\|f_N\|_{\dot{H}^{s+1+\delta}} \lesssim {N^{1+\delta}} \|f\|_{\dot{H}^s}.
\label{approx_est}
\end{equation}

Consider initial data $\vp^n_0, \vp_0 \in \dot{H}^s$ such that  $\vp^n_0\to \vp_0$ in $\dot{H}^s$ as $n\to \infty$, and let $\vp^n, \vp\in C([0,T]; \dot{H}^s)$ denote the corresponding solutions. We approximate the initial data by
$\vp_{0,N}^n$, $\vp_{0,N}$ and let  $\vp_N^n$, $\vp_N$ denote the corresponding solutions. Then
\begin{equation}\label{triangle}
\|\vp^n-\vp\|_{\dot{H}^s}\leq \|\vp^n-\vp^n_N\|_{\dot{H}^s}+ \|\vp^n_N-\vp_N\|_{\dot{H}^s}+\|\vp_N-\vp\|_{\dot{H}^s}.
\end{equation}
Using \eqref{ContD} and the fact that $\|Lf\|_{L^2}\lesssim \|f\|_{\dot{H}^{\delta}}$, we get that
\begin{align*}
&\|\vp-\vp_N\|_{\dot{H}^s}^2
 \lesssim\P( \|\vp\|_{L_t^\infty \dot{H}^s}, \|\vp_N\|_{L_t^\infty \dot{H}^s})
\left[\|\vp_{0}-\vp_{0,N}\|_{\dot{H}^s}^2 +\|\vp-\vp_N\|_{L_t^\infty \dot{H}^s}\|\vp-\vp_N\|_{L_t^\infty \dot{H}^2}\|\vp_N\|_{L_t^\infty\dot{H}^{s+1+\delta}} \right],
\end{align*}
with a similar estimate for $\|\vp^n-\vp^n_N\|_{\dot{H}^s}^2$.
The Lipschitz continuity \eqref{stability} and the approximation estimates \eqref{approx_est} give
\begin{align*}
\|\vp-\vp_N\|_{L_t^\infty \dot{H}^2}\|\vp_N\|_{L_t^\infty\dot{H}^{s+1+\delta}}
\lesssim \|\vp_0-\vp_{0,N}\|_{\dot{H}^2}\|\vp_{0,N}\|_{\dot{H}^{s+1+\delta}}
\lesssim \frac{1}{N^{s-3-\delta}} \|\vp_0\|_{\dot{H}^s}^2.
\end{align*}
Hence, since $s > 7/2$, we have for each $n\in \N$ that
\begin{equation}
\|\vp-\vp_N\|_{L_t^\infty \dot{H}^s} + \|\vp^n-\vp^n_N\|_{L_t^\infty \dot{H}^s}
\to 0\qquad \text{as $N\to \infty$}.
\label{est2}
\end{equation}
In addition, using \eqref{ContD}, we get that
\begin{align*}
&\|\vp^n_N-\vp_N\|_{\dot{H}^s}^2
\\
&\quad \lesssim  \P( \|\vp^n_N\|_{L_t^\infty \dot{H}^s}, \|\vp_N\|_{L_t^\infty \dot{H}^s})
\left[\|\vp^n_{0,N}-\vp_{0,N}\|_{\dot{H}^s}^2 +\|\vp^n_N-\vp_N\|_{L_t^\infty \dot{H}^s}\|\vp^n_N-\vp_N\|_{L_t^\infty \dot{H}^2}\|\vp_N\|_{L_t^\infty\dot{H}^{s+1+\delta}} \right].
\end{align*}
Since $\vp^n_N \to \vp_N$ as $n\to \infty$, equation \eqref{stability}, with $r=2$, then implies that for each $N\in \N$, we have
\begin{equation}
\|\vp^n_N-\vp_N\|_{L_t^\infty\dot{H}^s} \to 0\qquad   \text{as $n\to \infty$}.
\label{est1}
\end{equation}
It follows from \eqref{triangle}--\eqref{est1} that
$\|\vp^n-\vp\|_{L_t^\infty \dot{H}^s}\to 0$ as $n\to \infty$, which proves
that the solution map $U$ is continuous on $\dot{H}^s$.

\end{document}